\journalname{JOTA}
\newcommand{\dotprod}[2]{\left\langle #1,#2 \right\rangle}
\newcommand{\norms}[1]{\left\| #1 \right\|}
\newcommand{\expect}[1]{\mathbb{E}\left[ #1 \right]}
\definecolor{PineGreen}{HTML}{008B72}
  \providecommand{\bb}{\mathbf{b}}
  \providecommand{\ee}{\mathbf{e}}
  \renewcommand{\gg}{\mathbf{g}}
  \providecommand{\uu}{\mathbf{u}}
\providecommand{\mycomment}[3]{\todo[caption={},size=footnotesize,color=#1!20, inline]{\textbf{#2: }#3}}%
\providecommand{\inlinecomment}[3]{%
  {\color{#1}#2: #3}}%
\newcommand\commenter[2]%
\newcommand\csname i#1\endcsname[1]{\inlinecomment{#2}{#1}{##1}}
\newcommand\csname #1\endcsname[1]{\mycomment{#2}{#1}{##1}}
\newcommand{\xk}{x_{k}}
\newcommand{\xkk}{x_{k+1}}
\newcommand{\yk}{y_{k}}
\newcommand{\vkk}{z_{k+1}}
\newcommand{\agk}{\alpha_{k}}
\newcommand{\bgk}{\zeta_{k}}
\newcommand{\ak}{a_{k}}
\newcommand{\bk}{b_{k}}
\newcommand{\bkk}{b_{k+1}}
\newcommand{\gk}{\gamma_{k}}
\newcommand{\circledOne}{\text{\ding{172}}}
\newcommand{\circledTwo}{\text{\ding{173}}}
\newcommand{\circledThree}{\text{\ding{174}}}
\newcommand{\circledFour}{\text{\ding{175}}}  
\newcommand{\circledFive}{\text{\ding{176}}}
\newcommand{\al}[1]{{\color{black}#1}}
\begin{document}

\title{The ``Black-Box'' Optimization Problem: Zero-Order Accelerated Stochastic Method via Kernel~Approximation }

\titlerunning{Zero-Order Accelerated Stochastic Method via Kernel Approximation}


\author{Aleksandr Lobanov \and Nail Bashirov \and Alexander Gasnikov}

\institute{\textbf{Aleksandr Lobanov},  Corresponding author \at
              Moscow Institute of Physics and Technology \\
              \textit{9 Institutskiy per., Dolgoprudny, 141701, Russian Federation} \\
              Skolkovo Institute of Science and Technology \\
              \textit{30 Bolshoy Boulevard, bld. 1, Moscow, 121205, Russian Federation} \\
              ISP RAS Research Center for Trusted Artificial Intelligence\\
              \textit{25 A. Solzhenitsyn st., Moscow, 125047, Russian Federation}\\
              lobbsasha@mail.ru
           \and
              \textbf{Nail Bashirov}  \at
              Moscow Institute of Physics and Technology \\
              \textit{9 Institutskiy per., Dolgoprudny, 141701, Russian Federation} \\
              Institute for Information Transmission Problems\\
              \textit{19 B. Karetny per., Moscow, 127051, Russian Federation}\\
              bashirov.nr@phystech.edu
            \and
              \textbf{Alexander Gasnikov} \at 
              Moscow Institute of Physics and Technology \\
              \textit{9 Institutskiy per., Dolgoprudny, 141701, Russian Federation} \\
              Innopolis University \\
              \textit{1 Universitetskaya Str., Innopolis, 420500, Russian Federation} \\
              ISP RAS Research Center for Trusted Artificial Intelligence\\
              \textit{25 A. Solzhenitsyn st., Moscow, 125047, Russian Federation}\\
              gasnikov@yandex.ru
}


\date{Received: date / Accepted: date}

\maketitle

\begin{abstract}
In this paper, we study the standard formulation of an optimization problem when the computation of gradient is not available. Such a problem can be classified as a ``black box'' optimization problem, since the oracle returns only the value of the objective function at the requested point, possibly with some stochastic noise. Assuming convex, and higher-order of smoothness of the objective function, this paper provides a zero-order accelerated stochastic gradient descent (ZO-AccSGD) method for solving this problem, which exploits the higher-order of smoothness information via kernel approximation. As theoretical results, we show that the ZO-AccSGD algorithm proposed in this paper improves the convergence results of state-of-the-art (SOTA) algorithms, namely the estimate of iteration complexity. In addition, our theoretical analysis provides an estimate of the maximum allowable noise level at which the desired accuracy can be achieved. Validation of our theoretical results is demonstrated both on the model function and on functions of interest in the field of machine learning. We also provide a discussion in which we explain the results obtained and the superiority of the proposed algorithm over SOTA algorithms for solving the original problem.
\end{abstract}
\keywords{Black-box optimization \and Gradient-free methods \and Kernel approximation \and Maximum noise level}
\subclass{65K05 \and 90C15 \and  90C25}

\section{Introduction}

Black-box optimization problems \cite{Kimiaei_2022} (or also known as derivative-free optimization problems \cite{Conn_2009,Rios_2013}) arise when the gradient computation process is unavailable for some reason, e.g., the objective function $f(x)$ is not smooth \cite{Polyak_1969,Dvinskikh_2022,Huang_2023,Lobanov_MOTOR} or the process of computing the gradient $\nabla f(x)$ is too ``expensive'' compared to computing the value of the objective function $f(x)$ (in this case, the functions can be either smooth \cite{Ajalloeian_2020,Akhavan_2022} or of higher order of smoothness \cite{Bach_2016,Akhavan_2020,Lobanov_2023}). Moreover, there are often situations in practice \cite{Bogolubsky_2016} when the oracle returns a noisy value of the objective function (i.e., the value of the function $f(x)$ with some bounded noise $\tilde{\xi}$) at the requested point $x$, where the noise directly affects the ``cost'' of calling the oracle: the more inaccurate the oracle returns the value of the objective function (i.e., the greater the noise), the cheaper the oracle call. Such an oracle has a common ``charactonym'' name in the literature, namely a gradient-free oracle or a zero-order oracle \cite{Rosenbrock_1960}.  Since this class of optimization problem has significant interest in settings such as federated learning \cite{Lobanov_2022,Patel_2022}, distributed learning \cite{Akhavan_2021,Yu_2021,Lobanov_NET}, overparameterized models \cite{Lobanov_overparametrization} (in particular, in application problems such as hyperparameter tuning \cite{Li_2017,Hazan_2017}, multi-armed bandits \cite{Flaxman_2004,Bartlett_2008}, and many others \cite{Nguyen_2023}...), it is important to know and understand what approaches exist to solve this~class~of~problem.

Apparently, the main way to solve the black-box problem is to apply gradient-free algorithms/zero-order methods to this problem. Among such methods there are two classes (or two approaches to the creation of gradient-free algorithms): the first is the class of evolutionary algorithms \cite{Storn_1997,Auger_2005,Hansen_2006}, which can often show their efficiency only empirically; the second is the class based on the advantages of first-order algorithms \cite{Kiefer_1952,Gasnikov_ICML}, whose efficiency is provided in the form of theoretical estimates. Evolutionary algorithms are often used in the class of non-convex multimodal problems, in which the main goal is to find not a local but a global optimum. However, in the class~of~convex~problems, it makes sense to use theoretically based algorithms, which~often~guarantee faster convergence by taking advantage of first-order~algorithms.

The basic idea of creating efficient gradient-free algorithms for solving the convex black-box optimization problem is to use instead of the true gradient in first-order optimization algorithms some estimate of the gradient or also known as a gradient approximation \cite{Gasnikov_2022}. It is this seemingly simple idea that allows gradient-free algorithms to utilize the power of efficient first-order methods to solve the black box problem. However, it is important to correctly choose the algorithm and gradient approximation based on the original problem. Often, accelerated batched methods for solving corresponding optimization problems are chosen as efficient first-order algorithms, but there are also exceptions, e.g., for the class of problems satisfying the Polyak--Lojasiewicz condition, unaccelerated algorithms are already considered efficient (see \cite{Yue_2022} for more details). Regarding the question of the choice of gradient approximation: in \cite{Scheinberg_2022} it is shown that central finite difference is a more preferable scheme for constructing a gradient approximation than forward finite difference. Also in \cite{Lobanov_2023} in the Experiments section, the authors have shown on a model practical experiment the advantage of using randomized approximations, in particular among the randomized approximations, they highlight $l_2$ randomization. However, this approximation is the most preferable for solving a smooth black-box optimization problem, but not for a problem with higher-order of smoothness. In 1990, B. Polyak and A. Tsybakov managed to propose such a gradient approximation, which takes into account the advantage of higher-order of smoothness of the function \cite{Polyak_1990}. This gradient approximation is called the Kernel approximation. What distinguishes this approximation from $l_2$ randomization is the presence of a kernel by which the information about the higher-order of smoothness of the function is taken into account. And it was this paper \cite{Polyak_1990} that became the starting point for the study of the solution of the black box problem with the assumption that the function has a high order of smoothness.  

This paper investigates the improvement of the iteration complexity of gradient-free algorithms for solving a class of convex black-box optimization problems, assuming that the objective function has higher-order of smoothness. \al{To create an optimal zero-order optimization method in terms of iteration complexity, we use the accelerated batched stochastic gradient descent method \cite{Vaswani_2019} (Nesterov--accelerated) as a basis}. Among the gradient approximations that take into account the advantage of higher-order of smoothness, we choose the Kernel approximation because it is the one that requires only two calls to the gradient-free oracle per iteration (which guarantees a better estimate of oracle complexity), unlike the higher-order finite-difference gradient approximation \cite{Berahas_2022}. However, the Kernel approximation is not a biased gradient estimator, so it is important to understand how noise is accounted for in the first-order algorithm. To this end, we generalize the accelerated first-order algorithm \cite{Vaswani_2019} to the case with a biased gradient oracle. Thus, to create a gradient-free algorithm, we base on the accelerated first-order method with a biased gradient oracle (see Section \ref{sec:First_order}), using the Kernel approximation with a one-point zero-order oracle instead of the true gradient. In addition, we explicitly derive the estimate at the maximum noise level at which the desired accuracy can be achieved. Finally, we demonstrate our theoretical results on a model example.

    \subsection{Our contribution}
    Our contribution to this paper can be summarized as follows:
    \begin{itemize}
        \item We generalize the convergence results of the accelerated batched stochastic gradient descent algorithm  (Nesterov--accelerated) \cite{Vaswani_2019} to the case with a biased gradient oracle;

        \item  We provide a novel gradient-free optimization algorithm for solving a convex black-box optimization problem under an higher-order of smoothness condition: the zero-order accelerated stochastic gradient descent method (ZO-AccSGD, see Algorithm 1). This algorithm improves existing estimates on iteration complexity by working out the batching technique: $N = \mathcal{O} \left( \varepsilon^{-1/2} \right)$. Moreover, using the well-known analysis of bias and second moment (variance) estimation, we were able to get rid of the smoothness order dependence $\beta$ of the objective function in oracle complexity: $T = \mathcal{O} \left( \frac{d^2}{\varepsilon^{2 + \frac{2}{\beta - 1}}} \right)$;

        \item We provide an analysis that includes an elaboration on the maximum allowable noise level. We show that the noise level at which the desired accuracy is still achieved depends directly on the batch size $B$;

        \item We confirm our theoretical results in Section ``Experiments'' by considering both a model problem (solving a system of linear equations), and problems of interest in machine learning.
    \end{itemize}

    \subsection{Related works}

        \paragraph{Gradient-free oracle.} The gradient approximation is typically a finite difference zero-order oracle. Therefore, every work on gradient-free oracle utilizes one or another zero-order oracle concept. For example, the works in \cite{Ajalloeian_2020} propose an oracle concept that returns the exact value of the objective function at the requested point: $\tilde{f} = f(x)$. This concept is intuitive and widely used, especially in tutorials such as introductions to optimization \cite{Polyak_1987}, etc. The following concept of gradient-free oracle, introduced in \cite{Dvinskikh_2022,Lobanov_2022}, is oriented towards practical problems and is presented as follows: the oracle returns the value of the objective function at the requested point with some bounded deterministic noise $\tilde{f} = f(x) + \delta (x)$, where $|\delta (x)| \leq \Delta$. This concept applies well to deterministic optimization problems, but we can modernize it for a stochastic optimization problem \cite{Gasnikov_ICML}: $\tilde{f} = f(x, \xi) + \delta(x)$. This stochastic variant of the concept of a gradient-free oracle with bounded deterministic noise allows us to construct a gradient approximation depending on the availability of feedback. For example, if we can call the gradient-free oracle on one realization of the function twice, then the Kernel approximation with two-point feedback takes the following form: $\gg = \frac{d}{2h} \left( f(x + h r \ee, \xi) + \delta(x + h r \ee) - f(x - h r \ee, \xi) - \delta(x + h r \ee) \right) K(r) \ee$. However, if we only have access to one-point feedback, i.e., we can call the oracle on one realization of the function only once, then the kernel approximation takes the following form: $\gg = \frac{d}{h} \left( f(x + h r \ee, \xi) + \delta(x + h r \ee) \right) K(r) \ee$. In addition, there is another concept of the gradient-free oracle that is quite controversial in gradient approximation, namely the kernel approximation \cite{Polyak_1990,Akhavan_2020,Novitskii_2021,Lobanov_2023,Akhavan_2023} is as follows: $\gg = \frac{d}{2h} \left( f(x + h r \ee) + \tilde{\xi}_1 - f(x - h r \ee) - \tilde{\xi}_2 \right) K(r) \ee$. Such a gradient approximation can quite rightly be called a one-point feedback approximation, although at first glance it is not even obvious that a gradient-free oracle that returns the value of the objective function at the requested point with some bounded stochastic noise $\tilde{f} = f(x) + \tilde{\xi}$ is a stochastic gradient-free oracle. However, if we consider the stochastic noise $\tilde{\xi}_1$ and $\tilde{\xi}_2$ as realizations of the function, it will be clear enough that such a gradient approximation can rightfully be called a one-point approximation, since the function is computed though twice in one iteration, but on different realizations. In our work, we also use the concept of a gradient-free oracle with stochastic noise, which generates a gradient approximation with one-point feedback.   

        \paragraph{Bounded gradient noises.} Currently, there are a series of works \cite{Woodworth_2021_over,Rakhlin_2012,Hazan_2014,Bertsekas_1996,Stich_2019,Lobanov_2023,Schmidt_2013,Srebro_2010} that assume different constraints on gradient noise. For example, \cite{Rakhlin_2012,Hazan_2014} uses a standard and common in earlier works constraint on gradient noise, namely, they estimate some constant: $\expect{\norms{\nabla f(x,\xi)}^2} \leq \sigma^2$. However, there is some disadvantage of such a constraint, namely the large number constraint, that is, if the norm of the gradient decreases with the number of iterations, the estimate of the second moment will remain as large. To address this problem, some works \cite{Bertsekas_1996,Stich_2019,Lobanov_2023} impose a constraint that is considered more adaptive: $\expect{\norms{\nabla f(x,\xi)}^2} \leq \rho \norms{\nabla f(x)}^2 + \sigma^2$. There are also papers \cite{Schmidt_2013} that assume the strong growth condition is satisfied: $\expect{\norms{\nabla f(x, \xi)}^2} \leq \rho \norms{\nabla f(x)}^2.$ In the case where the model is overparameterized \cite{Woodworth_2021_over,Srebro_2010}, it is proposed to estimate the gradient noise as follows: $\expect{\norms{\nabla f(x^*, \xi)}^2} \leq \sigma_*^2$. The essential difference from the previous constraints is that the gradient estimate is evaluated at the solution point and depends directly on the solution of the problem, i.e., if $f^* = \min_x f(x)$ tends to zero, then also $\sigma^2_* \leq L f^*$ decreases. In our work, we use the following constraint on the noise of the biased gradient oracle $\expect{\norms{\gg(x, \xi)}^2} \leq \rho \norms{\nabla f(x)}^2 + \sigma^2$, since this constraint is adaptive and our approach in creating a gradient-free algorithm is based on the approach of the paper \cite{Vaswani_2019}, which also addresses this constraint. The gradient oracle $\gg(x,\xi)$ will be introduced in Subsection \ref{subsec:ASS_gradient_oracle}.   

        \paragraph{Iteration complexity.} The study of a class of convex black-box optimization problems under the condition of higher-order of smoothness began with a 1990 paper\cite{Polyak_1990}, where a method of gradient estimation through the kernel and lower bound estimates of a gradient-free algorithm was proposed. At present, there already exist ``pretty'' results in this direction \cite{Bach_2016,Akhavan_2020,Novitskii_2021,Lobanov_2023,Akhavan_2023}, which can be considered as ``state of the art''. For example, in \cite{Akhavan_2020}, the authors proposed a Zero-Order Stochastic Projected Gradient algorithm that used a central finite difference kernel approximation and required the following iteration $N$ (as well as oracle $T$) complexity to achieve a given accuracy $\varepsilon$: $T = N = \mathcal{\tilde{O}} \left( \frac{d^{2 + \frac{2}{\beta -1}}}{\varepsilon^{2 + \frac{2}{\beta - 1}}} \right).$ In another paper \cite{Novitskii_2021}, the authors managed to improve this dimensionality estimate by using some ``trick'' in analyzing the bias of the gradient-free oracle: $T = N = \mathcal{\tilde{O}} \left( \frac{d^{2 + \frac{1}{\beta -1}}}{\varepsilon^{2 + \frac{2}{\beta - 1}}} \right).$ In a recent paper \cite{Akhavan_2023}, the authors have managed to propose an improved analysis for estimating the bias and second moment (variance) of the gradient approximation, getting rid of the smoothness order dependence in the degree of dimensionality (in the strongly convex case).  Moreover, it is not difficult to show that with the help of this analysis one can get rid of the dependence in the convex case as well (see \cite{Novitskii_2021} for the transformation from the strongly convex case to the convex case). However, these works focus on one of the three optimality criteria, namely oracle complexity. In our work, we use an improved analysis from \cite{Akhavan_2023} for gradient approximation to improve existing estimates of oracle complexity in the convex case, namely getting rid of the dependence of dimensionality on smoothness order, and by using an accelerated version of stochastic gradient descent and working out the batching technique we improve iteration~complexity~estimation.
    
    \subsection{Paper organization}
    This paper has the following structure. Section \ref{sec:Problem_formulation} introduces the formulation of the problem considered in this paper, as well as the main idea of its solution. Section \ref{sec:First_order} presents generalized results for the case of a biased oracle to solve the problem. The main result, which provides a novel gradient-free algorithm, can be found in Section \ref{sec:Main_results}. A discussion of the results is given in Section \ref{sec:Discussion}. Section \ref{sec:Experiments} presents the experiments. While Section \ref{sec:Conclusion} concludes the paper. 

\section{Problem Formulation}
\label{sec:Problem_formulation}
In this section, we introduce the notations, definitions and assumptions used in our analysis to formulate the optimization problem. We also describe the main idea of our approach to solve the black-box optimization problem.

\paragraph{Notation.} We use $\dotprod{x}{y}:= \sum_{i=1}^{d} x_i y_i$ to denote standard inner product of $x,y \in \mathbb{R}^d$, where $x_i$ and $y_i$ are the $i$-th component of $x$ and $y$ respectively. We denote Euclidean norm ($l_2$-norm) in~$\mathbb{R}^d$ as $\norms{x} = \| x\|_2 := \sqrt{\dotprod{x}{x}}$. We use the following notation $B_2^d(r):=\left\{ x \in \mathbb{R}^d : \| x \| \leq r \right\}$ to denote Euclidean ball ($l_2$-ball) and  $S_2^d(r):=\left\{ x \in \mathbb{R}^d : \| x \| = r \right\}$ to denote Euclidean sphere. Operator $\mathbb{E}[\cdot]$ denotes full mathematical expectation. 

We consider a standard optimization problem of the following form, which is commonly encountered in the literature, especially at the first acquaintance~with~optimization methods:

\begin{equation}
    \label{eq:init_problem}
    f^* = \min_{x \in Q \subseteq \mathbb{R}^d} f(x),
\end{equation}
where $f:\mathbb{R}^d \rightarrow \mathbb{R}$ convex function that we want to minimize on the convex set $Q$. This general formulation is a broad class of optimization problems. To narrow down the class of optimization problems, we use a standard formulation of the problem and impose constraints on the function and the gradient oracle in the form of assumptions that will be used in our analysis throughout paper. 

    \subsection{Assumptions on objective function}
    In our analysis presented in Section \ref{sec:First_order}, we assume that function $f$ is $L$-smooth. 
    \begin{assumption}[$L$-smooth]\label{ass:L_smooth}
        Function $f$ is $L$-smooth if it~holds
        \begin{equation*}
            f(y) \leq f(x) + \dotprod{\nabla f(x)}{y - x} + \frac{L}{2} \norms{y - x}^2, \quad \forall x,y \in Q.
        \end{equation*}
    \end{assumption}
    And in Section \ref{sec:Main_results} our theoretical reasoning assumes that the objective function $f(x)$ is not just smooth, but has a higher order of smoothness.
    \begin{assumption}[Higher order smoothness] \label{Ass:Higher_order}
        Let $l$ denote maximal integer number strictly less than~$\beta$. Let $\mathcal{F}_\beta(L)$ denote the set of all functions $f: \mathbb{R}^d \rightarrow \mathbb{R}$ which are differentiable $l$ times and for~all~$x,z \in Q$ the H\"{o}lder-type condition:
        \begin{equation*}
            \left| f(z) - \sum_{0 \leq |n| \leq l} \frac{1}{n!} D^n f(x) (z-x)^n \right| \leq L_\beta \norms{z - x}^\beta,
        \end{equation*}
        where $L_\beta>0$, the sum is over multi-index $n~=~(n_1, ..., n_d) \in \mathbb{N}^d$, we used the notation $n!~=~n_1! \cdots n_d!$, $|n| = n_1 + \cdots + n_d$, and $\forall v = (v_1, ..., v_d) \in \mathbb{R}^d$ we defined $D^n f(x) v^n = \frac{\partial ^{|n|} f(x)}{\partial^{n_1}x_1 \cdots \partial^{n_d}x_d} v_1^{n_1} \cdots v_d^{n_d}$.
    \end{assumption}
    The assumptions introduced in this subsection are standard and common in the literature in related works, e.g., see Assumption \ref{ass:L_smooth} in \cite{Nemirovski_2009,Nesterov_2018}, and Assumption \ref{Ass:Higher_order} in the following works \cite{Polyak_1990,Bach_2016,Akhavan_2023}. Moreover, it is not hard to see the connection between two assumptions, namely, in the case $\beta = 2: L_2 = \frac{L}{2}$.

    \subsection{Assumptions on gradient oracle}
    \label{subsec:ASS_gradient_oracle}
    Before presenting the assumptions on the gradient oracle, we introduce a formal definition, which is used extensively in the analysis for the convergence results of first-order algorithm in Section \ref{sec:First_order}. Regarding convergence of zero-order algorithm, gradient-free oracle will be introduced in Subsection~\ref{subsec:Gradient_approximation}.

    \begin{definition}[Biased Gradient Oracle]
    \label{Definition_1}
        A map $\mathbf{g}~:~\mathbb{R}^d~\times~\mathcal{D} \rightarrow \mathbb{R}^d$ s.t.
        \begin{equation}\label{eq:Definition_1}
            \mathbf{g}(x,\xi) = \nabla f(x, \xi) + \mathbf{b}(x)
        \end{equation}
        for a bias $\mathbf{b}: \mathbb{R}^d \rightarrow \mathbb{R}^d$ and unbiased stochastic gradient $\expect{\nabla f(x, \xi)}=\nabla f(x)$.
    \end{definition}
    We assume that the bias and gradient noise are bounded.
    \begin{assumption}[Bounded bias]
    \label{Ass:Bounded_bias}
        There exists constant $\delta \geq 0$ s.t. $\forall x \in \mathbb{R}^d$
        \begin{equation}\label{eq:Bounded_bias}
            \norms{\mathbf{b}(x)} = \norms{\expect{\gg(x,\xi)} - \nabla f(x)}\leq  \delta.
        \end{equation}
    \end{assumption}
    \begin{assumption}[Bounded noise]
    \label{Ass:Gradient_noise}
        There exists constants $\rho, \sigma^2 \geq 0$ such that the more general condition of strong growth is satisfied $\forall x \in \mathbb{R}^d$
        \begin{equation}\label{eq:Gradient_noise}
            \expect{\norms{\gg(x,\xi)}^2} \leq \rho \norms{\nabla f(x)}^2 + \sigma^2.
        \end{equation}
    \end{assumption}

    Assumptions \ref{Ass:Bounded_bias} and \ref{Ass:Gradient_noise} are not uncommon in works studying optimization algorithms with biased oracle (see Definition \ref{Definition_1}), such as \cite{Ajalloeian_2020,Lobanov_2023,Lobanov_overparametrization}.

    \subsection{The main idea of problem solving}
    The problem presented above does not strongly correspond to the black-box optimization problem, which is also stated in the title of the paper. This is done in order to present in Section \ref{sec:First_order} a first-order algorithm that has access to the noisy value of the gradient (see Definition \ref{Definition_1}). However, our approach to solving the optimization problem \eqref{eq:init_problem} when the gradient is still not available to the algorithm (black-box problems) is to create a gradient-free optimization algorithm based on and exploiting the power of the first-order method. Despite the fact that the original problem \eqref{eq:init_problem} is deterministic, we must rely on a first-order optimization algorithm that solves exactly the stochastic optimization problem $f(x) := \expect{f(x,\xi)}$, since ``stochasticity'' is artificially created in the gradient approximation (see Subsection \ref{subsec:Gradient_approximation}). Moreover, the Kernel approximation is a biased gradient estimator, so it is important to choose an algorithm that accounts for the imprecision in the gradient oracle. Thus, to summarize our approach, in Section \ref{sec:First_order} we generalize the SOTA results to the case with a biased gradient oracle (see Definition \ref{Definition_1}) that satisfies Assumptions \ref{Ass:Bounded_bias} and~\ref{Ass:Gradient_noise}, and use this first-order algorithm to create a gradient-free algorithm (see Section \ref{sec:Main_results}) for solving the black-box optimization problem under the condition of higher-order of smoothness of the objective function (see Assumption \ref{Ass:Higher_order}). 

\section{Generalization of Convergence Results for Accelerated SGD to the Biased Oracle}
\label{sec:First_order}
In this section, we provide the first-order algorithm on which the novel gradient-free method for solving the black-box optimization problem in Section 1 will be based. Since this first-order algorithm must solve a stochastic optimization problem (due to the artificial ``stochasticity'' in the gradient approximation: $\ee \in S_{2}^d(1)$, which will be introduced later), we reformulate the initial optimization problem as follows \eqref{eq:init_problem}:
\begin{equation}
    \label{eq:stoc_init_problem}
    f^* = \min_{x \in Q \subseteq \mathbb{R}^d} \left\{ f(x) := \expect{f(x, \xi)} \right\}.
\end{equation}
Next, before providing the convergence of the first-order algorithm with the biased gradient oracle, we present the known convergence results of accelerated stochastic gradient descent for solving problem \eqref{eq:stoc_init_problem}.

\vspace{-1em}
    \subsection{Background}
    In 2019, the authors of \cite{Vaswani_2019} provided convergence results of Nesterov-accelerated Stochastic Gradient Descent \cite{Nesterov_2012} for the problem when the unbiased gradient oracle $\gg(x, \xi) = \nabla f(x, \xi)$ (see Definition \ref{Definition_1} with $\delta = 0$ in Assumption \ref{Ass:Bounded_bias}) satisfies the strong growth condition (see Assumption \ref{Ass:Gradient_noise}). In particular, this algorithm consists of the following update rules:
    
    \begin{empheq}[box=\fbox]{align*}
        x_{k+1} &= y_k - \eta \gg(y_k, \xi_k)\\
        y_k &= \alpha_k z_k + (1-\alpha_k) x_k \\
        z_{k+1} &= \zeta_k z_k + (1 - \zeta_k) y_k - \gamma_k \eta \gg(y_k, \xi_k).
    \end{empheq}

    And has the following convergence result in the case where $\sigma \neq 0$:
    \begin{lemma}[\cite{Vaswani_2019}, Theorem 1]
    \label{lem:Vaswani}
        Let the function $f$ satisfy Assumption \ref{ass:L_smooth}, and the unbiased gradient oracle $\gg(x, \xi) = \nabla f(x, \xi)$ satisfies the strong growth condition ( Assumption \ref{Ass:Gradient_noise}), then the accelerated Stochastic Gradient Descent by Nesterov with chosen parameters:
        \begin{align*}
            \gamma_{k} = \frac{\tilde{\rho}^{-1} + \sqrt{\tilde{\rho}^{-2} + 4\gamma_{k - 1}^{2}}}{2}; \quad a_{k + 1} = \gamma_k \sqrt{\eta \tilde{\rho}}; \quad \alpha_k = \frac{\gamma_{k} \eta}{\gamma_{k} \eta + a_k^2}; \quad \eta = \frac{1}{\tilde{\rho} L},
        \end{align*} 
        where $\tilde{\rho} = \max\{1, \rho\}$, has the following rate of convergence:
        \begin{equation*}
            \boxed{\expect{f(x_{N})} - f^* \lesssim \frac{\tilde{\rho}^2 L R^2}{N^2} + \frac{N \sigma^2}{L \tilde{\rho}^2}.}
        \end{equation*}
    \end{lemma}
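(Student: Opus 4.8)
The plan is to run the classical Nesterov estimate-sequence / Lyapunov argument, specialized to the stochastic setting with multiplicative (strong-growth) noise and zero bias. I would introduce the potential
\[
\Phi_k = \ak^2\left(\expect{f(\xk)} - f^*\right) + \tfrac{1}{2}\expect{\norms{\zk - \xopt}^2},
\]
with $R := \norms{x_0 - \xopt}$, and aim to establish the one-step estimate $\expect{\Phi_{k+1}} \le \Phi_k + \tfrac{1}{2}\gk^2\eta^2\sigma^2$, after which the bound follows by telescoping over $k = 0,\dots,N-1$.

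Before that, I would record the algebraic consequences of the stated parameter choices, since these are what make the telescoping close. Squaring the recursion for $\gk$ gives $\gk^2 - \gamma_{k-1}^2 = \gk/\tilde{\rho}$, hence $\akk^2 - \ak^2 = \gk\eta$ and $\agk = \gk\eta/\akk^2 = 1 - \ak^2/\akk^2$. The same relation yields $(\gk-\gamma_{k-1})(\gk+\gamma_{k-1}) = \gk/\tilde{\rho}$, so $\tfrac{1}{2\tilde{\rho}} \le \gk-\gamma_{k-1}\le \tfrac{1}{\tilde{\rho}}$ and therefore $\gk \asymp k/\tilde{\rho}$. With $a_0 = 0$ (i.e. $\gamma_{-1}=0$, $z_0 = x_0$) this gives $\Phi_0 = \tfrac{1}{2}R^2$, the growth $a_N^2 = \gamma_{N-1}^2\eta\tilde{\rho}\gtrsim N^2/(\tilde{\rho}^2 L)$, and the sum bound $\sum_{k<N}\gk^2 \lesssim N^3/\tilde{\rho}^2$ used at the end.

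Next I would produce the two building blocks, each under the conditional expectation given the history at step $k$, using unbiasedness $\expect{\gg(\yk,\xi_k)} = \nabla f(\yk)$ and the strong-growth bound $\expect{\norms{\gg(\yk,\xi_k)}^2}\le\rho\norms{\nabla f(\yk)}^2 + \sigma^2$ of Assumption \ref{Ass:Gradient_noise}. From $L$-smoothness applied to $\xkk = \yk - \eta\gg(\yk,\xi_k)$ I obtain a descent inequality $\expect{f(\xkk)} \le f(\yk) - \eta\left(1 - \tfrac{L\eta\rho}{2}\right)\norms{\nabla f(\yk)}^2 + \tfrac{L\eta^2}{2}\sigma^2$. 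Expanding $\norms{\zkk - \xopt}^2$ from the $z$-update contributes a negative inner product $-\gk\eta\dotprod{\nabla f(\yk)}{\zk - \xopt}$ (after also using the $\bgk$-coupling) together with the variance $\gk^2\eta^2\sigma^2$ and a second multiplicative term $\gk^2\eta^2\rho\norms{\nabla f(\yk)}^2$. I would then link the two via convexity, $f(\xopt) \ge f(\yk) + \dotprod{\nabla f(\yk)}{\xopt - \yk}$, and the coupling $\yk = \agk\zk + (1-\agk)\xk$, which rewrites $\zk - \yk$ in terms of $\xk - \yk$ so that the gradient inner products merge with the descent.

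The main obstacle is the bookkeeping in this last combination: after multiplying the descent inequality by $\akk^2$ and adding the distance expansion, one must verify that every term carrying $\nabla f(\yk)$ either cancels through the identities $\akk^2 - \ak^2 = \gk\eta$ and $\agk = \gk\eta/\akk^2$, or is dominated. The delicate point is the multiplicative noise: the step size $\eta = 1/(\tilde{\rho}L)$ with $\tilde{\rho}\ge\rho$ is calibrated exactly so that $1 - L\eta\rho/2\ge 1/2$ and so that the weighted descent $\tfrac{1}{2}\akk^2\eta\norms{\nabla f(\yk)}^2 = \tfrac{1}{2}\gk^2\eta^2\tilde{\rho}\norms{\nabla f(\yk)}^2$ absorbs the stray $\tfrac{1}{2}\gk^2\eta^2\rho\norms{\nabla f(\yk)}^2$ from the $z$-update, leaving only the additive variance $\tfrac{1}{2}\gk^2\eta^2\sigma^2$. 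Getting the signs and weights to match is the crux. Once the recursion holds, summing gives $a_N^2(\expect{f(x_N)} - f^*)\le\Phi_N\le\Phi_0 + \tfrac{1}{2}\eta^2\sigma^2\sum_{k<N}\gk^2$; dividing by $a_N^2\gtrsim N^2/(\tilde{\rho}^2 L)$ and inserting $\Phi_0 = \tfrac{1}{2}R^2$ and $\sum_{k<N}\gk^2\lesssim N^3/\tilde{\rho}^2$ yields $\expect{f(x_N)} - f^* \lesssim \tilde{\rho}^2 L R^2/N^2 + N\sigma^2/(L\tilde{\rho}^2)$, as claimed.
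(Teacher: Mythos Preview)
Your proposal is correct and follows essentially the same estimate-sequence/Lyapunov argument as the paper's own proof (Appendix~\ref{app:proof_th1}, specialized to $\delta=0$): expand $\norms{\zkk-\xopt}^2$, couple it with the $L$-smooth descent at $\yk$ via convexity and the relation $\yk-\zk=\tfrac{1-\agk}{\agk}(\xk-\yk)$, and telescope using the identities $\gk^2-\gamma_{k-1}^2=\gk/\tilde\rho$, $\akk^2-\ak^2=\gk\eta$, $\gk\asymp k/\tilde\rho$. The only cosmetic difference is that the paper first isolates an explicit bound on $\norms{\nabla f(\yk)}^2$ (cf.\ \eqref{eq:upper_normgrad}) and substitutes it into the $R_{k+1}^2$ expansion before multiplying by the weights, whereas you fold the weighted descent and distance into a single potential $\Phi_k$ from the outset; the cancellations and the resulting variance term are the same up to constants.
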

    This result is considered a state of the art for this problem formulation, however, as mentioned earlier, due to the Kernel approximation, which accumulates noise (i.e., has bias), we can't use this algorithm as a basis for creating a gradient-free optimization method. Therefore, in the next subsection, we extend the convergence results of the algorithm (Lemma \ref{lem:Vaswani}) to the case where the gradient oracle $\gg(x, \xi)$ (see Definition \ref{Definition_1}) can return a noisy gradient value $\nabla f(x, \xi) + \bb(x)$, i.e., Assumption \ref{Ass:Bounded_bias} is satisfied. 

    \vspace{-1em}
    \subsection{Accelerated SGD with biased gradients}
    In this subsection, we present the main result of Section \ref{sec:First_order}, namely, we provide a first-order algorithm that we will base on in the next section to create a gradient-free algorithm. To achieve one of the main goals of our work, namely to improve and, if necessary, to obtain an optimal estimate of the iteration complexity $N$ of the gradient-free optimization algorithm, we not only generalize the convergence result of Lemma \ref{lem:Vaswani} to the case with a biased oracle, but also get rid of the constant $\rho$ from the first term, thus improving the estimate by the number of successive iterations of the accelerated Stochastic Gradient Descent. We improve the results of Lemma \ref{lem:Vaswani} in terms of iteration complexity by applying the batching technique. Thus, Accelerated Stochastic Gradient Descent (Nesterov acceleration) with a biased gradient oracle $\gg(x, \xi)$ (see Definition \ref{Definition_1}) has the following convergence results presented in Theorem~\ref{th:First_order}.

    \begin{theorem}[Biased AccSGD]
    \label{th:First_order}
        Let the function $f$ satisfy Assumption \ref{ass:L_smooth}, and the gradient oracle $\gg(x, \xi)$ from Definition \ref{Definition_1} satisfies Assumptions \ref{Ass:Bounded_bias} and \ref{Ass:Gradient_noise}, then the accelerated Stochastic Gradient Descent with batching ($B$ is a batch size) by Nesterov with $\rho_B = \max\{1, \frac{\rho}{B}\}$ and chosen parameters:
        \begin{align*}
            \gamma_{k} = \frac{\rho_B^{-1} + \sqrt{\rho_B^{-2} + 4\gamma_{k - 1}^{2}}}{2}; \quad a_{k + 1} = \gamma_k \sqrt{\eta \rho_B}; \quad \alpha_k = \frac{\gamma_{k} \eta}{\gamma_{k} \eta + a_k^2}; \quad \eta = \frac{1}{\rho_B L}
        \end{align*} 
        has the following rate of convergence:
        \begin{equation*}
            \boxed{\expect{f(x_{N})} - f^* \lesssim \frac{\rho_B^2 L R^2}{N^2} + \frac{N \sigma^2}{\rho_B^2 L B } + \delta \tilde{R} + \frac{N}{L} \delta^2.}
        \end{equation*}
    \end{theorem}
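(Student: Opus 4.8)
The plan is to run the potential-function (Lyapunov) argument behind Lemma~\ref{lem:Vaswani}, but with two modifications: the variance reduction coming from the mini-batch, and the explicit bookkeeping of the bias $\bb(\cdot)$ of Definition~\ref{Definition_1}. First I would record the moments of the batched oracle $\gg_k := \tfrac{1}{B}\sum_{i=1}^{B}\gg(\yk,\xi_k^i)$. Its mean is $\expect{\gg_k}=\nabla f(\yk)+\bb(\yk)$, so the bias is unaffected by batching and still satisfies $\norms{\bb(\yk)}\le\delta$ (Assumption~\ref{Ass:Bounded_bias}); by independence of the samples the fluctuation is divided by $B$, upgrading Assumption~\ref{Ass:Gradient_noise} to $\expect{\norms{\gg_k}^2}\lesssim\norms{\nabla f(\yk)}^2+\tfrac{\rho}{B}\norms{\nabla f(\yk)}^2+\tfrac{\sigma^2}{B}+\delta^2$. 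This is exactly what justifies replacing $\tilde\rho$ by $\rho_B=\max\{1,\rho/B\}$ and $\sigma^2$ by $\sigma^2/B$ in the parameter schedule.

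Next I would establish the two per-step estimates. Applying $L$-smoothness (Assumption~\ref{ass:L_smooth}) to $\xkk=\yk-\eta\gg_k$ gives the descent inequality $f(\xkk)\le f(\yk)-\eta\dotprod{\nabla f(\yk)}{\gg_k}+\tfrac{L\eta^2}{2}\norms{\gg_k}^2$; expanding the auxiliary update $\zkk=\bgk\zk+(1-\bgk)\yk-\gk\eta\gg_k$ writes $\norms{\zkk-\xopt}^2$ as $\norms{\bgk\zk+(1-\bgk)\yk-\xopt}^2$ minus a cross term $2\gk\eta\dotprod{\gg_k}{\,\cdot\,}$ plus $\gk^2\eta^2\norms{\gg_k}^2$. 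Taking the conditional expectation, I split $\expect{\gg_k}=\nabla f(\yk)+\bb(\yk)$ and use convexity of $f$ to turn $\dotprod{\nabla f(\yk)}{\yk-\xopt}$ and $\dotprod{\nabla f(\yk)}{\yk-\xk}$ into the gaps $f(\yk)-f^*$ and $f(\yk)-f(\xk)$. I then combine the two estimates into the Lyapunov function $\Phi_k=a_k^2(f(\xk)-f^*)+\tfrac{1}{2}\norms{\zk-\xopt}^2$. The stated parameter identities $a_{k+1}^2=a_k^2+\gk\eta$ and $1-\alpha_k=a_k^2/a_{k+1}^2$ (both consequences of the choices of $\gamma_k,a_{k+1},\alpha_k$) make the $f(\yk)$-terms align and, crucially via $\eta=1/(\rho_B L)$, cause the $\norms{\nabla f(\yk)}^2$ contributions from the descent and from the second moment to cancel. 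What remains is a one-step bound $\expect{\Phi_{k+1}}\le\expect{\Phi_k}+c_1\gk^2\eta^2\tfrac{\sigma^2}{B}+c_2\gk\eta\,\delta\,\norms{\zk-\xopt}+c_3\,a_{k+1}^2\eta\,\delta^2$, where the middle term is produced by Cauchy--Schwarz on the bias cross term of the $z$-recursion and the last by Young's inequality on $-\eta\dotprod{\nabla f(\yk)}{\bb(\yk)}$ in the descent.

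Finally I would telescope from $k=0$ to $N-1$ and divide by $a_N^2$, using $a_0=0$ so that $a_N^2(f(x_N)-f^*)\le\tfrac{1}{2}\norms{z_0-\xopt}^2+\sum_k(\text{errors})$. The identity $\sum_{k}\gk\eta=a_N^2$ converts the bias cross term into $2\delta\tilde R$ once $\norms{\zk-\xopt}$ is bounded uniformly by $\tilde R$; the bias-square error sums to $\tfrac{N}{\rho_B L}\delta^2\le\tfrac{N}{L}\delta^2$; and the asymptotics $\gk\sim k/(2\rho_B)$, hence $a_N^2\sim N^2/(\rho_B^2 L)$, turn the leading distance term into $\tfrac{\rho_B^2 L R^2}{N^2}$ (with $R=\norms{z_0-\xopt}=\norms{x_0-\xopt}$) and the accumulated variance $\sum_k\gk^2\eta^2\sigma^2/B$ into $\tfrac{N\sigma^2}{\rho_B^2 L B}$. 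Collecting the four contributions yields the claimed rate.

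The step I expect to be the main obstacle is the control of the bias rather than the variance. Unlike the variance, the bias cross term $-2\gk\eta\dotprod{\bb(\yk)}{\zk-\xopt}$ has no definite sign and is proportional to the a~priori unbounded distance $\norms{\zk-\xopt}$, so the clean in-expectation monotonicity of $\Phi_k$ is lost. Making the argument rigorous requires a uniform trajectory bound $\norms{\zk-\xopt}\le\tilde R$ — obtained either from boundedness of the feasible set $Q$ or by a bootstrapping argument that feeds the partial potential estimate back into the distance — and it is this quantity that surfaces as $\tilde R$ in the $\delta\tilde R$ term. The residual $\delta^2$ contribution is by comparison routine, requiring only that the half of $\eta\norms{\nabla f(\yk)}^2$ generated by Young's inequality still be absorbable after the gradient-norm cancellation, which holds (up to a constant rescaling of $\eta$ hidden in $\lesssim$) because $\rho_B\ge1$.
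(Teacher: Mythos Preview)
Your proposal is correct and follows essentially the same route as the paper's proof in Appendix~\ref{app:proof_th1}: expand $\norms{\zkk-\xopt}^2$ and the smoothness descent inequality, split $\expect{\gg_k}=\nabla f(\yk)+\bb(\yk)$, use convexity on the true-gradient inner products, absorb the $\norms{\nabla f(\yk)}^2$ terms via the choice $\eta\simeq 1/(\rho_B L)$ and the parameter identities (the paper records these as $a_{k+1}^2=\gamma_k^2\eta\rho_B$, $b_{k+1}^2\gamma_k\eta-a_{k+1}^2+a_k^2=0$, and the quadratic recursion of Lemma~\ref{lemma:gamma}), then telescope with $\gamma_k\gtrsim k/(2\rho_B)$. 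Your anticipated obstacle is exactly the one the paper faces: the bias cross term is handled by Cauchy--Schwarz and a uniform trajectory bound, and indeed the paper simply \emph{defines} $\tilde R=\max_k\{\norms{\zk-\xopt},\norms{\zk-\yk}\}$ without proving it finite, so no bootstrapping is carried out there either. The only organisational difference is that you reduce to the batched second-moment bound at the outset, whereas the paper runs the whole argument with generic $\rho,\sigma^2$ and substitutes $\rho\mapsto\rho_B$, $\sigma^2\mapsto\sigma^2/B$ in the last line; both are equivalent.
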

    It is not hard to see that the convergence result of the accelerated batched algorithm presented in Theorem \ref{th:First_order} is a generalization to the case when the gradient oracle $\gg(x, \xi)$ (see Definition \ref{Definition_1}) returns a noisy gradient value. If we put $\delta = 0$ we get exactly the same convergence as in Lemma \ref{lem:Vaswani} up to the constants $\rho$ from the condition of strongly growing (see Assumption \ref{Ass:Gradient_noise}) and $B$ the size of the batch. The two terms accounting for noise accumulation are standard for the accelerated algorithm (see, e.g., \cite{Gorbunov_2019,Dvinskikh_2021,Vasin_2023}) and can be found, for example, by using the ($\delta,L$)-oracle technique \cite{Gasnikov_ICML}. It is through the use of the batched technique in Theorem \ref{th:First_order} that we will able to achieve an optimal estimate on the iteration complexity $N = \mathcal{O}\left(\sqrt{\varepsilon^{-1} LR^2}\right)$ that will obtained from the first term, since it dominates the second term for a sufficiently large value of the batch size $B \geq \rho$. This result allows us to use this accelerated batched algorithm to create a gradient-free method for solving the black-box optimization problem under the condition of higher-order of smoothness of the objective function $f$. A detailed proof of Theorem \ref{th:First_order} can be found~in~Appendix~\ref{app:proof_th1}.

\vspace{-1em}
\section{Main Results}
\label{sec:Main_results}
In this section, we present the main result of our paper, namely a novel gradient-free method for solving the black-box optimization problem \eqref{eq:init_problem} with the condition that the objective function is not only smooth but also has a higher order of smoothness (i.e., the Assumption \ref{Ass:Higher_order} is satisfied). Our approach to create a gradient-free algorithm is to choose and use a gradient estimate $\gg(x, \ee)$ (an approximation of the gradient that will account for the higher-order of smoothness of the function) instead of the real gradient oracle $\gg(x, \xi)$ see Definition \ref{Definition_1} in the accelerated batched first-order method. 

\vspace{-1em}
\subsection{Gradient approximation}
    \label{subsec:Gradient_approximation}
    To solve a deterministic convex black-box optimization problem \eqref{eq:init_problem}, where the ``black box'' plays the role of a gradient-free oracle $\tilde{f}$, which is formally defined as follows: we assume that the oracle $\tilde{f}$ can only return the value of the objective function $f(x)$ at the requested point with some stochastic noise~$\tilde{\xi}$:
    \begin{equation}
        \label{eq:gradient_free_oracle}
        \Tilde{f} = f(x) + \tilde{\xi},
        \vspace{-0.5em}
    \end{equation}
    
    where $\tilde{\xi}$ is stochastic, possibly adversarial, noise, $\expect{\tilde{\xi}^2} \leq \Delta^2$. Then we use the so-called ``Kernel-based approximation'', which was presented in 1990 in the paper \cite{Polyak_1990} and was recognized years later in a number of papers \cite{Bach_2016,Akhavan_2020,Novitskii_2021,Lobanov_2023,Akhavan_2023}, as an approximation of the gradient that takes into account the information about the higher-order of smoothness, has the following form:
    \begin{equation}\label{eq:gradient_approx}
        \gg(x,\ee) = d \frac{f(x+ h r \ee) + \tilde{\xi}_1 - f(x - h r \ee) - \tilde{\xi}_2}{2 h} K(r) \ee,
    \end{equation}
    where $h>0$ is a smoothing parameter, $\ee \in S_2^d(1)$ is a vector uniformly distributed on the Euclidean unit sphere, $r$ is a vector uniformly distributed on the interval $r \in [0,1]$, $K:~[-1,1]~\rightarrow~\mathbb{R}$ is a kernel function that satisfies
        \begin{gather*}
            \mathbb{E}[K(u)] = 0, \; \mathbb{E}[u K(u)] = 1, \;
            \mathbb{E}[u^j K(u)] = 0, \; j=2,...,l, \; \mathbb{E}[|u|^\beta |K(u)|] < \infty.
        \end{gather*}
    This concept of noise is often found in the literature \cite{Akhavan_2020,Lobanov_2023}, where the $\tilde{\xi}_1 \neq \tilde{\xi}_2 $ such that $\mathbb{E}[\tilde{\xi}_1^2] \leq \Delta^2$ and $\mathbb{E}[\tilde{\xi}_2^2] \leq \Delta^2$, $\Delta \geq 0$ is level noise, and the random variables $\tilde{\xi}_1$ and $\tilde{\xi}_2$ are independent from $\ee$ and $r$. Also, this concept does not necessarily have to have a zero mean $\tilde{\xi}_1$ and $\tilde{\xi}_2$. It is enough that $\mathbb{E}[\tilde{\xi}_1 \ee] = 0$ and $\mathbb{E}[\tilde{\xi}_2 \ee] = 0$. Moreover, the gradient approximation \eqref{eq:gradient_approx} may at first glance appear to be an approximation with two-point feedback because of the structure of the central finite difference, but this is not entirely true. Since $\tilde{\xi}_1 \neq \tilde{\xi}_2$ and if we consider $\tilde{\xi}_1$ and $\tilde{\xi}_2$ as concrete realizations of the objective function $f(x)$, it is clear that the function cannot be computed on the same realization twice per iteration. Thus, the approximation with this concept of gradient-free oracle \eqref{eq:gradient_free_oracle} is an approximation with one-point feedback. 

    \subsection{Zero-order accelerated stochastic gradient descent} \label{subsec:ZO_AccSGD}
    Now that we have chosen the gradient approximation and the accelerated batched first-order method, we can present a novel gradient-free algorithm Zero-Order Accelerated Stochastic Gradient Descent (ZO-AccSGD), which is obtained by replacing the real gradient with the gradient approximation \eqref{eq:gradient_approx}.

    \begin{algorithm}[thb]
       \caption{Zero-Order Accelerated Stochastic Gradient Descent}
       \label{algorithm}
        \begin{algorithmic}
           \STATE {\bfseries Input:} iteration number $N$, batch size $B$, Kernel $K: [-1, 1] \rightarrow \mathbb{R}$, step size $\eta$, smoothing parameter $h$, $x_0=y_0=z_0~\in~\mathbb{R}^d$, $\alpha_0 = \gamma_0 = 0$.
           
           \FOR{$k=0$ {\bfseries to} $N-1$}
           \STATE  {1.} ~~~Sample vectors $\ee_1, \ee_2 ..., \ee_B$ uniformly distributed on the unit sphere $S_2^d(1)$ and  
           \STATE  \textcolor{white}{1.} ~~~scalars $r_1, r_2, ..., r_B$ uniformly distributed on the interval [-1, 1] independently
           \STATE  {2.} ~~~Define $\gg(x_k,\ee_i) =  d \frac{\Tilde{f}(x_k+h r_i \ee_i) - \Tilde{f}(x_k - h r_i \ee_i)}{2 h} K(r_i) \ee_i$ via \eqref{eq:gradient_free_oracle}
           \STATE  {3.} ~~~Calculate $\gg_k = \frac{1}{B} \sum_{i=1}^B \gg (x_k,\ee_i)$ 
           \STATE  {4.} ~~~$x_{k+1} \gets y_k - \eta \gg_k$ 
           \STATE {5.} ~~~$z_{k+1} \gets z_k - \gamma_k \eta \gg_k$
           \STATE {6.} ~~~$ y_{k + 1} \gets \alpha_{k + 1} z_{k + 1} + (1-\alpha_{k + 1}) x_{k + 1}$
           \ENDFOR
           \STATE {\bfseries Return:} $x_{N}$
        \end{algorithmic}
    \end{algorithm}
    To obtain the convergence rate of the Algorithm \ref{algorithm}, we need to first estimate the bias $\norms{\expect{\gg(x, \ee)} - \nabla f(x)}$ and second moment (variance) $\expect{\norms{\gg(x, \ee)}^2}$ of the gradient approximation $\gg(x, \ee)$ of \eqref{eq:gradient_approx}. Then, by substituting these estimates into the convergence result of the first-order algorithm we plan to rely on (in our case it is the Biased Accelerated Stochastic Gradient Descent, see Theorem \ref{th:First_order}), \al{in particular instead of $\sigma^2$ (see Assumption~\ref{Ass:Gradient_noise}) from the second term we need to substitute the obtained estimate on the second moment (variance) $\expect{\norms{\gg(x, \ee)}^2}$, and instead of $\delta$ (see Assumption~\ref{Ass:Bounded_bias}) of the third and fourth terms we need to substitute the obtained estimate for the bias $\norms{\expect{\gg(x, \ee)} - \nabla f(x)}$, we get the convergence rate of the novel gradient-free algorithm}. Then, using the bias and second moment estimates for the gradient approximation that takes into account information about the higher order of smoothness (Kernel approximation \eqref{eq:gradient_approx}) presented in \cite{Akhavan_2023} we have the following convergence results for Zero-Order Accelerated Stochastic Gradient Descent. \al{Note that in Theorem~\ref{th:gradient_free} we present convergence results for an arbitrary kernel function satisfying the conditions presented in Subsection~\ref{subsec:Gradient_approximation}.  }
    \begin{theorem}[Convergence results]
        \label{th:gradient_free}
        Let the function $f$ satisfy Assumption \ref{Ass:Higher_order} and the gradient approximation $\gg(x, \ee)$ of \eqref{eq:gradient_approx} satisfies Assumptions \ref{Ass:Bounded_bias} and \ref{Ass:Gradient_noise}, then Zero-Order Accelerated Stochastic Gradient Descent (see Algorithm \ref{algorithm}) with $\rho_B = \max \{ 1, \frac{4 d \kappa}{B} \}$, and with the chosen algorithm parameters:
        \begin{align*}
            \gamma_{k} = \frac{\rho_B^{-1} + \sqrt{\rho_B^{-2} + 4\gamma_{k - 1}^{2}}}{2}; \quad a_{k + 1} = \gamma_k \sqrt{\eta \rho_B}; \quad \alpha_k = \frac{\gamma_{k} \eta}{\gamma_{k} \eta + a_k^2}; \quad \eta = \frac{1}{\rho_B L}
        \end{align*}
        converges to the desired $\varepsilon$ accuracy, $\expect{f(x_N)} - f^* \leq \varepsilon$ 
        \begin{itemize}
        
            \item in the case $B \in \left[1, 4d \kappa \right]$,  $h \lesssim \varepsilon^{3/4}$ and $\beta \geq \frac{7}{3}$ after\\
            \fbox{\begin{minipage}{11 cm}
            \begin{equation*}
                N = \mathcal{O}\left( \sqrt{\frac{\al{d^2} L R^2}{B^2\varepsilon}} \right); \quad \quad \quad T = N \cdot B = \mathcal{O}\left( \sqrt{\frac{d^2 L R^2}{\varepsilon}} \right)
            \end{equation*}
            number of iterations and  gradient-free oracle calls, respectively, at
            \begin{equation*}
                \Delta \lesssim \frac{\al{\varepsilon^{3/2}}}{\sqrt{d}} \quad \textit{maximum noise level;}
            \end{equation*}
                
            
            \end{minipage}}\\

            \item \textit{in the case} $B > 4d \kappa$ and $h \lesssim \varepsilon^{1/(\beta - 1)}$ after\\
            \fbox{\begin{minipage}{11 cm}
            \begin{equation*}
                N = \mathcal{O}\left( \sqrt{\frac{L R^2}{ \varepsilon}} \right); \quad  T = N \cdot B = \max \left\{ \mathcal{O}\left( \sqrt{\frac{d^2 L R^2}{ \varepsilon}} \right), \mathcal{O}\left( \frac{d^2 \Delta^2}{\varepsilon^{2 + \frac{2}{\beta - 1}}} \right)\right\}
            \end{equation*}
            number of iterations and  gradient-free oracle calls, respectively, at
            \begin{equation*}
                \Delta \lesssim  \frac{\varepsilon^{\frac{3 \beta + 1}{4 (\beta - 1)}}}{d} B^{1/2}  \quad \textit{maximum noise level;}
            \end{equation*}
            \end{minipage}}
        \end{itemize}
    \end{theorem}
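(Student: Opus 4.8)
The plan is to feed the master bound of Theorem~\ref{th:First_order} with the bias and second-moment estimates of the Kernel approximation \eqref{eq:gradient_approx}, and then tune the free parameters $h$, $N$, $B$ separately in each of the two batch-size regimes. First I would identify the two oracle quantities entering Theorem~\ref{th:First_order}. Viewing $\gg(x,\ee)$ as the biased oracle of Definition~\ref{Definition_1}, the estimates of \cite{Akhavan_2023} give a bias of order $\delta \lesssim L_\beta h^{\beta-1}$ (the zero-mean noise $\tilde\xi_1,\tilde\xi_2$ drops out since $\expect{\tilde\xi_i\ee}=0$) and a strong-growth second moment $\expect{\norms{\gg(x,\ee)}^2} \lesssim 4d\kappa\,\norms{\nabla f(x)}^2 + \tfrac{d^2\Delta^2}{h^2}$. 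This is precisely Assumptions~\ref{Ass:Bounded_bias}–\ref{Ass:Gradient_noise} with $\rho = 4d\kappa$ and $\sigma^2 \sim d^2\Delta^2/h^2$, which explains the stated $\rho_B = \max\{1,\tfrac{4d\kappa}{B}\}$. Plugging $\delta,\rho,\sigma^2$ into the rate of Theorem~\ref{th:First_order} gives
\[
\expect{f(x_N)} - f^* \lesssim \frac{\rho_B^2 L R^2}{N^2} + \frac{N\,d^2\Delta^2}{\rho_B^2\,L\,B\,h^2} + L_\beta h^{\beta-1}\tilde R + \frac{N L_\beta^2 h^{2(\beta-1)}}{L},
\]
after which I would demand each term be $\lesssim\varepsilon$ and split on $B$.

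\emph{Case $B\le 4d\kappa$}, where $\rho_B = 4d\kappa/B$. The first term forces $N=\mathcal{O}(\sqrt{d^2LR^2/(B^2\varepsilon)})$, hence $T=NB=\mathcal{O}(\sqrt{d^2LR^2/\varepsilon})$. In the noise term the factor $\rho_B^{-2}B^{-1}=B/(16d^2\kappa^2)$ cancels the $d^2$ coming from $\sigma^2$, and tracking this cancellation is what yields the clean, dimension-only oracle complexity. Choosing $h\sim\varepsilon^{3/4}$ makes the two bias terms $\lesssim\varepsilon$ exactly when $\beta\ge 7/3$, since then $h^{\beta-1}\lesssim\varepsilon^{3(\beta-1)/4}\lesssim\varepsilon$; this is the origin of that hypothesis. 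Substituting this $h$ and $N$ into the noise term and solving for $\Delta$ gives the stated $\Delta\lesssim\varepsilon^{3/2}/\sqrt d$.

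\emph{Case $B>4d\kappa$}, where $\rho_B=1$. Now the first term gives the $d$-free $N=\mathcal{O}(\sqrt{LR^2/\varepsilon})$, and the sharper choice $h\sim\varepsilon^{1/(\beta-1)}$ drives the bias terms below $\varepsilon$. The noise requirement $\tfrac{Nd^2\Delta^2}{LBh^2}\lesssim\varepsilon$ now reads as a lower bound $B\gtrsim d^2\Delta^2/\varepsilon^{3/2+2/(\beta-1)}$; taking $B$ to be the maximum of this and the threshold $4d\kappa$ and multiplying by $N$ produces $T=\max\{\mathcal{O}(\sqrt{d^2LR^2/\varepsilon}),\,\mathcal{O}(d^2\Delta^2/\varepsilon^{2+2/(\beta-1)})\}$, while solving the same inequality for $\Delta$ and simplifying the exponent $\tfrac34+\tfrac1{\beta-1}=\tfrac{3\beta+1}{4(\beta-1)}$ yields $\Delta\lesssim d^{-1}\varepsilon^{(3\beta+1)/(4(\beta-1))}B^{1/2}$.

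The main obstacle will be bookkeeping the $\rho_B$- and $B$-dependence consistently through the noise term: the product $\rho_B^2 B$ behaves completely differently in the two regimes, and it is exactly this factor that decides whether the dimension $d$ lands in $N$ or is pushed entirely into $B$ (and thence into $T$). A secondary subtlety is that the smoothing radius is tuned differently in the two cases and not purely by optimization—$h\sim\varepsilon^{3/4}$ in the first case is a deliberately $\beta$-free choice that trades the stronger hypothesis $\beta\ge 7/3$ for a clean dimension-only noise bound—so I would, in each case, verify that all four terms are simultaneously $\lesssim\varepsilon$ before reading off $N$, $T$, and $\Delta$.
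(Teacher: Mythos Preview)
Your overall strategy matches the paper's exactly: derive bias and second-moment bounds for the Kernel estimator, substitute into Theorem~\ref{th:First_order}, and then balance the resulting terms in the two batch regimes. The case split and the way you read off $N$, $T$, $\Delta$ are all as in the paper.

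There is, however, one genuine omission. Your second-moment bound is incomplete: the paper's derivation (via Wirtinger--Poincar\'e followed by adding and subtracting $2\nabla f(x_k)$ and using $L$-smoothness) produces
\[
\expect{\norms{\gg(x,\ee)}^2}\;\le\;4d\kappa\,\norms{\nabla f(x)}^2\;+\;4d\kappa\,L^2 h^2\;+\;\frac{\kappa d^2\Delta^2}{h^2},
\]
so $\sigma^2$ has an extra additive piece $4d\kappa L^2h^2$ that you dropped. Substituting the full $\sigma^2$ into Theorem~\ref{th:First_order} gives a fifth term $\tfrac{N d\kappa L h^2}{\rho_B^2 B}$ in the master bound. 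In the small-batch regime $B\le 4d\kappa$, it is precisely this term (not a stylistic preference) that forces $h\lesssim\varepsilon^{3/4}$; the condition $\beta\ge 7/3$ then arises because $h=\varepsilon^{3/4}$ must simultaneously satisfy the bias constraint $h\lesssim\varepsilon^{1/(\beta-1)}$. Your narrative inverts this causality---you present $h\sim\varepsilon^{3/4}$ as a free ``$\beta$-free'' design choice, but with only your four terms nothing stops you from taking the larger $h\sim\varepsilon^{1/(\beta-1)}$, which would yield a strictly better (and false) noise bound while silently violating the dropped constraint. In the large-batch regime the extra term is not binding (it gives $h\lesssim\varepsilon^{3/4}\sqrt{B/d}$, weaker than $\varepsilon^{1/(\beta-1)}$ for $B>4d\kappa$), so your Case~2 analysis goes through unchanged once you record this. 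Fixing the gap is mechanical: include the $4d\kappa L^2h^2$ contribution in $\sigma^2$, carry the resulting fifth term, and let it drive the choice of $h$ in the small-batch case.
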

    From the results of Theorem \ref{th:gradient_free}, it is not difficult to see that at the batch size $B = 1$ the dimensionality factor $d$ comes out in the iteration complexity. The dimensionality can be eliminated by batching, i.e. the larger the batch size $B$, the better the iteration complexity $N$ becomes, in particular, starting from $B = 4 d \kappa$ the iteration complexity completely gets rid of dimensionality and reaches the optimal estimate for the accelerated algorithm. It is worth noting that the maximum noise level when the batch size is $1 \leq B \leq 4d \kappa$, in particular when $\beta \geq \frac{7}{3}$ has, perhaps close to the optimal value for the smooth case (i.e., the case where the Assumption \ref{ass:L_smooth} holds): $\Delta \lesssim d^{-1/2} \varepsilon^{3/2}$. However, this estimate is invariant regardless of the order of smoothness. But there is a way to improve the maximum noise level at which the algorithm is still guaranteed to achieve the desired $\varepsilon$ accuracy. If we take the size of the batches larger than $B > 4 d \kappa$, then the maximum allowable noise level $\Delta$ will improve and, in particular, will depend on the order of smoothness $\beta$. That is, the maximum noise level can be maximized in two ways: taking a larger batch size or using a function with higher-order of smoothness. However, improving the maximum noise level entails a deterioration of the oracle complexity, but has no effect on the iteration complexity $N \sim \varepsilon^{-1/2}$. It is not difficult to see that in any case considered, our results outperform all known results (see subsection Related works), in particular, we improve the iterative complexity estimate, as well as get rid of the dimensionality dependence of oracle complexity, and finally we present estimates of the maximum noise level as a function~of~batch~size. 
    \begin{proof}
        First, we write the bias and the second moment of the gradient approximation \eqref{eq:gradient_approx} from the improved analysis of the paper \cite{Akhavan_2023}.
    \paragraph{Bias of gradient approximation}
    Using the variational representation of the Euclidean norm, and definition of gradient approximation \eqref{eq:gradient_approx} we can write:
    \begin{align}
        \norms{\bb(x)} &= \norms{\expect{\gg(x_k,\xi,\ee)} - \nabla f(x_k)} \nonumber \\
        &= \norms{\frac{d}{2 h} \expect{ \left( \Tilde{f}(x_k + h r \ee) - \Tilde{f}(x_k - h r \ee) \right) K(r) \ee } - \nabla f(x_k)}
        \nonumber \\
        & \overset{\circledOne}{=} \norms{\frac{d}{h}\expect{ f(x_k + h r \ee) K(r) \ee} - \nabla f(x_k)}
        \nonumber \\
        & \overset{\circledTwo}{=}   \norms{\expect{\nabla f(x_k + h r \uu) r K(r)} - \nabla f(x_k)}
        \nonumber \\
        & = \sup_{z \in S_2^d(1)} \expect{\left( \nabla_z f(x_k + h r \uu) - \nabla_z f(x_k)\right) r K(r)}
        \nonumber \\
        &\!\!\! \! \! \! \! \! \overset{\eqref{Taylor_expansion_1}, \eqref{Taylor_expansion_2}}{\leq} \kappa_\beta h^{\beta-1}\frac{L}{(l-1)!} \expect{\norms{u}^{\beta - 1}}
        \nonumber \\
        & \leq \kappa_\beta h^{\beta-1}\frac{L}{(l-1)!} \frac{d}{d+\beta - 1}
        \nonumber \\
        & \lesssim \kappa_\beta L h^{\beta - 1}, \label{eq:proof_bias}
    \end{align}
    where $u \in B_2^d(1)$, $\circledOne =$ the equality is obtained from the fact, namely, distribution of $e$ is symmetric, $\circledTwo =$ the equality is obtained from a version of Stokes’ theorem \cite{Zorich_2016} (see Section 13.3.5, Exercise 14a).

    \paragraph{Bounding second moment of gradient approximation} By definition gradient approximation \eqref{eq:gradient_approx} and Wirtinger-Poincare inequality \eqref{eq:Wirtinger_Poincare} we have
    \begin{align}
        &\expect{\norms{\gg(x_k,\xi,\ee)}^2}  = \frac{d^2}{4 h^2} \expect{\norms{\left(\Tilde{f}(x_k + h r \ee) - \Tilde{f}(x_k - h r \ee)\right) K(r) \ee}^2}
        \nonumber \\
        & = \frac{d^2}{4 h^2} \expect{\left(f(x_k + h r \ee) - f(x_k - h r \ee) + (\tilde{\xi}_1 - \tilde{\xi}_2))\right)^2 K^2(r)} 
        \nonumber \\
        &\! \overset{\eqref{eq:squared_norm_sum}}{\leq} \frac{\kappa d^2}{2 h^2} \left( \expect{\left(f(x_k + h r \ee) - f(x_k - h r \ee)\right)^2} + 2 \Delta^2 \right)
        \nonumber \\
        &\! \! \overset{\eqref{eq:Wirtinger_Poincare}}{\leq} \frac{\kappa d^2}{2 h^2} \left( \frac{h^2}{d} \expect{\norms{ \nabla f(x_k + h r \ee) + \nabla f(x_k - h r \ee)}^2} + 2 \Delta^2 \right) 
        \nonumber \\
        & = \frac{\kappa d^2}{2 h^2} \left( \frac{h^2}{d} \expect{\norms{ \nabla f(x_k + h r \ee) + \nabla f(x_k - h r \ee) \pm 2 \nabla f(x_k)}^2} + 2 \Delta^2 \right)
        \nonumber \\
        & \! \! \overset{\eqref{eq:L_smoothness}}{\leq} \underbrace{4d \kappa}_{\rho} \norms{\nabla f(x_k)}^2 + \underbrace{4 d \kappa  L^2 h^2 + \frac{\kappa d^2 \Delta^2}{h^2}}_{\sigma^2}. \label{eq:proof_variance}
    \end{align}
    We can now explicitly obtain the convergence rate of the novel gradient-free Algorithm \ref{algorithm}: Zero-Order Accelerated Stochastic Gradient Descent by substituting the bias \eqref{eq:proof_bias} and the second moment \eqref{eq:proof_variance} of the gradient approximation \eqref{eq:gradient_approx} into the convergence rate of the first-order algorithm, which we use as the base for creating zero-order algorithm, namely Biased Accelerated Stochastic Gradient Descent (see Theorem~\ref{th:First_order}) with $\rho_B = \max \{1, \frac{4\kappa d}{B} \}$:\\

        \resizebox{\linewidth}{!}{$\boxed{\expect{f(x_{N})} - f^* \lesssim \frac{\rho_B^2 L R^2}{N^2} + \frac{N d \kappa  L^2 h^2}{\rho_B^2 L B} + \frac{N \kappa d^2 \Delta^2}{h^2 \rho_B^2 L B} +  \Tilde{R} \kappa_\beta L_\beta h^{\beta - 1} +  \frac{N \kappa_\beta^2 L_\beta^2 h^{2(\beta - 1)}}{L}.}$}\\

        \vspace{1em}
    To obtain estimates for the iteration number $N$, the total number of gradient-free oracle calls $T$, and the maximum noise level $\Delta$, we consider 4 cases depending on the batch size $B$. 
    
    \vspace{1em}
    \paragraph{$\boxed{\textbf{\it Case 1, when }B = 1}$} we have the following convergence rate:\\

        \resizebox{\linewidth}{!}{$\expect{f(x_{N})} - f^* \lesssim \underbrace{\frac{\kappa^2 d^2 L R^2}{N^2}}_{\circledOne} + \underbrace{\frac{N d \kappa  L^2 h^2}{\kappa^2 d^2 L}}_{\circledTwo} + \underbrace{\frac{N \kappa d^2 \Delta^2}{h^2 \kappa^2 d^2 L}}_{\circledThree} +  \underbrace{\Tilde{R} \kappa_\beta L_\beta h^{\beta - 1}}_{\circledFour} +  \underbrace{\frac{N \kappa_\beta^2 L_\beta^2 h^{2(\beta - 1)}}{L}}_{\circledFive}.$}\\
        
    \textbf{From term $\circledOne$}, we find iteration number $N$ required for Algorithm \ref{algorithm} to achieve $\varepsilon$-accuracy:
    \begin{align}
        \circledOne: \quad \frac{\kappa^2 d^2 L R^2}{N^2} \leq \varepsilon \quad & \Rightarrow \quad N \geq \sqrt{\frac{\kappa^2 d^2 L R^2}{\varepsilon}};
        \nonumber \\
         N &= \mathcal{O}\left( \sqrt{\frac{d^2 L R^2}{\varepsilon}} \right). \label{eq:proof_iterations_1}
    \end{align}
    \textbf{From terms $\circledTwo$, $\circledFour$ and $\circledFive$} we find the smoothing parameter $h$:
    \begin{align*}
        &\circledTwo: \quad \frac{N d \kappa  L^2 h^2}{\kappa^2 d^2 L} \leq 
        \varepsilon \quad \Rightarrow \quad h^2 \overset{\eqref{eq:proof_iterations_1}}{\lesssim} \frac{\kappa^2 d^2  \varepsilon^{3/2}}{\kappa^2 d^2} \quad  \Rightarrow \quad h \lesssim \varepsilon^{3/4} ;
        \nonumber \\
        &\circledFour: \quad \Tilde{R} \kappa_\beta L_\beta h^{\beta - 1} \leq \varepsilon \quad \Rightarrow \quad h \lesssim \varepsilon^{1/(\beta - 1)};
        \nonumber \\
        &\circledFive: \quad \frac{N \kappa_\beta^2 L_\beta^2 h^{2(\beta - 1)}}{L} \leq \varepsilon \quad \Rightarrow \quad h^{2(\beta - 1)} \overset{\eqref{eq:proof_iterations_1}}{\lesssim} d^{-1}\varepsilon^{3/2} \quad h \lesssim \frac{\varepsilon^{\frac{3}{4(\beta - 1)}}}{d^{\frac{1}{2(\beta - 1)}}}.
    \end{align*}
    \begin{itemize}
        \item When $\beta \geq \frac{7}{3}$, we have that $h \lesssim \min \{\varepsilon^{3/4}, \varepsilon^{1/(\beta - 1)}, \frac{\varepsilon^{\frac{3}{4(\beta - 1)}}}{d^{\frac{1}{2(\beta - 1)}}} \}  = \varepsilon^{3/4}$.

        \textbf{From term $\circledThree$}, we find the maximum noise level $\Delta$ at which Algorithm \ref{algorithm} can still achieve the desired accuracy:
        \begin{align*}
            &\circledThree: \quad \frac{N \kappa d^2 \Delta^2}{h^2 \kappa^2 d^2 L} \leq \varepsilon \quad \Rightarrow \quad \Delta^2 \lesssim \frac{\varepsilon^{3}  d^2}{d^3} \quad \Rightarrow \quad \Delta \lesssim \frac{\varepsilon^{3/2}}{\sqrt{d}}.
        \end{align*}

        \item When $\beta < \frac{7}{3}$, we have that $h \lesssim \min \{\varepsilon^{3/4}, \varepsilon^{1/(\beta - 1)}, \frac{\varepsilon^{\frac{3}{4(\beta - 1)}}}{d^{\frac{1}{2(\beta - 1)}}} \}  = \varepsilon^{1/(\beta - 1)}$.

        \textbf{From term $\circledThree$}, we find the maximum noise level $\Delta$ at which Algorithm \ref{algorithm} can still achieve the desired accuracy:
        \begin{align*}
            &\circledThree: \quad \frac{N \kappa d^2 \Delta^2}{h^2 \kappa^2 d^2 L} \leq \varepsilon \quad \Rightarrow \quad \Delta^2 \lesssim \frac{\varepsilon^{\frac{3}{2} + \frac{2}{\beta - 1}}  d^2}{d^3} \quad \Rightarrow \quad \Delta \lesssim \frac{\varepsilon^{\frac{3\beta + 1}{4(\beta - 1)}}}{\sqrt{d}}.
        \end{align*}
    \end{itemize}
    The oracle complexity $T$ in this case coincides with the iteration complexity $N$ and has the following form:
    \begin{equation*}
        T = N \cdot B = \mathcal{O}\left( \sqrt{\frac{d^2 L R^2}{\varepsilon}} \right).
    \end{equation*}

    \paragraph{$\boxed{\textbf{\it Case 2, when }1 < B < 4 \kappa d}$} we have the following convergence rate:\\
    
        \resizebox{\linewidth}{!}{$\expect{f(x_{N})} - f^* \lesssim \underbrace{\frac{\kappa^2 d^2 L R^2}{N^2 B^2}}_{\circledOne} + \underbrace{\frac{N d \kappa  L^2 h^2 B^2}{\kappa^2 d^2 L B}}_{\circledTwo} + \underbrace{\frac{N \kappa d^2 \Delta^2 B^2}{h^2 \kappa^2 d^2 L B}}_{\circledThree}  +  \underbrace{\Tilde{R} \kappa_\beta L_\beta h^{\beta - 1}}_{\circledFour} +  \underbrace{\frac{N \kappa_\beta^2 L_\beta^2 h^{2(\beta - 1)}}{L}}_{\circledFive}.$}\\
        
    \textbf{From term $\circledOne$}, we find iteration number $N$ required for Algorithm \ref{algorithm} to achieve $\varepsilon$-accuracy:
    \begin{align}
        \circledOne: \quad \frac{\kappa^2 d^2 L R^2}{N^2 B^2} \leq \varepsilon \quad & \Rightarrow \quad N \geq \sqrt{\frac{\kappa^2 d^2 L R^2}{\varepsilon B^2}};
        \nonumber \\
         N &= \mathcal{O}\left( \sqrt{\frac{d^2 L R^2}{\varepsilon B^2}} \right). \label{eq:proof_iterations_2}
    \end{align}
    \textbf{From terms $\circledTwo$, $\circledFour$ and $\circledFive$} we find the smoothing parameter $h$:
    \begin{align*}
        &\circledTwo: \quad \frac{N d \kappa  L^2 h^2 B^2}{\kappa^2 d^2 L B} \leq 
        \varepsilon \quad \Rightarrow \quad h^2 \overset{\eqref{eq:proof_iterations_2}}{\lesssim} \frac{\kappa^2 d^2 B^2  \varepsilon^{3/2}}{\kappa^2 d^2 B^2} \quad  \Rightarrow \quad h \lesssim \varepsilon^{3/4} ;
        \nonumber \\
        &\circledFour: \quad \Tilde{R} \kappa_\beta L_\beta h^{\beta - 1} \leq \varepsilon \quad \Rightarrow \quad h \lesssim \varepsilon^{1/(\beta - 1)};
        \nonumber \\
        &\circledFive: \quad \frac{N \kappa_\beta^2 L_\beta^2 h^{2(\beta - 1)}}{L} \leq \varepsilon \quad \Rightarrow \quad h^{2(\beta - 1)} \overset{\eqref{eq:proof_iterations_2}}{\lesssim} d^{-1}\varepsilon^{3/2} \quad h \lesssim \frac{\varepsilon^{\frac{3}{4(\beta - 1)}}}{d^{\frac{1}{2(\beta - 1)}}}.
    \end{align*}
    \begin{itemize}
        \item When $\beta \geq \frac{7}{3}$, we have that $h \lesssim \min \{\varepsilon^{3/4}, \varepsilon^{1/(\beta - 1)}, \frac{\varepsilon^{\frac{3}{4(\beta - 1)}}}{d^{\frac{1}{2(\beta - 1)}}} \}  = \varepsilon^{3/4}$.

        \textbf{From term $\circledThree$}, we find the maximum noise level $\Delta$ at which Algorithm \ref{algorithm} can still achieve the desired accuracy:
        \begin{align*}
            &\circledThree: \quad \frac{N \kappa d^2 B^2 \Delta^2}{h^2 \kappa^2 d^2 L B} \leq \varepsilon \quad \Rightarrow \quad \Delta^2 \lesssim \frac{\varepsilon^{3}  d^2 B^2}{d^3 B^2} \quad \Rightarrow \quad \Delta \lesssim \frac{\varepsilon^{3/2}}{\sqrt{d}}.
        \end{align*}

        \item When $\beta < \frac{7}{3}$, we have that $h \lesssim \min \{\varepsilon^{3/4}, \varepsilon^{1/(\beta - 1)}, \frac{\varepsilon^{\frac{3}{4(\beta - 1)}}}{d^{\frac{1}{2(\beta - 1)}}} \}  = \varepsilon^{1/(\beta - 1)}$.

        \textbf{From term $\circledThree$}, we find the maximum noise level $\Delta$ at which Algorithm \ref{algorithm} can still achieve the desired accuracy:
        \begin{align*}
            &\circledThree: \quad \frac{N \kappa d^2 B^2 \Delta^2}{h^2 \kappa^2 d^2 L B} \leq \varepsilon \quad \Rightarrow \quad \Delta^2 \lesssim \frac{\varepsilon^{\frac{3}{2} + \frac{2}{\beta - 1}}  d^2 B^2}{d^3 B^2} \quad \Rightarrow \quad \Delta \lesssim \frac{\varepsilon^{\frac{3\beta + 1}{4(\beta - 1)}}}{\sqrt{d}}.
        \end{align*}
    \end{itemize}
    The oracle complexity $T$ in this case has the following form:
    \begin{equation*}
        T = N \cdot B = \mathcal{O}\left( \sqrt{\frac{d^2 L R^2}{\varepsilon}} \right).
    \end{equation*}

    \paragraph{$\boxed{\textbf{\it Case 3, when }B = 4\kappa d}$} we have the following convergence rate:
    \begin{equation*}
        \expect{f(x_{N})} - f^* \lesssim \underbrace{\frac{L R^2}{N^2}}_{\circledOne} + \underbrace{\frac{N d \kappa  L^2 h^2}{ L d \kappa}}_{\circledTwo} + \underbrace{\frac{N \kappa d^2 \Delta^2}{h^2 L \kappa d}}_{\circledThree} +  \underbrace{\Tilde{R} \kappa_\beta L_\beta h^{\beta - 1}}_{\circledFour} +  \underbrace{\frac{N \kappa_\beta^2 L_\beta^2 h^{2(\beta - 1)}}{L}}_{\circledFive}.
    \end{equation*}
    \textbf{From term $\circledOne$}, we find iteration number $N$ required for Algorithm \ref{algorithm} to achieve $\varepsilon$-accuracy:
    \begin{align}
        \circledOne: \quad \frac{L R^2}{N^2} \leq \varepsilon \quad & \Rightarrow \quad N \geq \sqrt{\frac{L R^2}{\varepsilon}};
        \nonumber \\
         N &= \mathcal{O}\left( \sqrt{\frac{L R^2}{\varepsilon}} \right). \label{eq:proof_iterations_3}
    \end{align}
    \textbf{From terms $\circledTwo$, $\circledFour$ and $\circledFive$} we find the smoothing parameter $h$:
    \begin{align*}
        &\circledTwo: \quad \frac{N d \kappa  L^2 h^2}{L d \kappa} \leq 
        \varepsilon \quad \Rightarrow \quad h^2 \overset{\eqref{eq:proof_iterations_3}}{\lesssim} \varepsilon^{3/2} \quad  \Rightarrow \quad h \lesssim \varepsilon^{3/4} ;
        \nonumber \\
        &\circledFour: \quad \Tilde{R} \kappa_\beta L_\beta h^{\beta - 1} \leq \varepsilon \quad \Rightarrow \quad h \lesssim \varepsilon^{1/(\beta - 1)};
        \nonumber \\
        &\circledFive: \quad \frac{N \kappa_\beta^2 L_\beta^2 h^{2(\beta - 1)}}{L} \leq \varepsilon \quad \Rightarrow \quad h^{2(\beta - 1)} \overset{\eqref{eq:proof_iterations_3}}{\lesssim} d^{-1}\varepsilon^{3/2} \quad h \lesssim \frac{\varepsilon^{\frac{3}{4(\beta - 1)}}}{d^{\frac{1}{2(\beta - 1)}}}.
    \end{align*}
    \begin{itemize}
        \item When $\beta \geq \frac{7}{3}$, we have that $h \lesssim \min \{\varepsilon^{3/4}, \varepsilon^{1/(\beta - 1)}, \frac{\varepsilon^{\frac{3}{4(\beta - 1)}}}{d^{\frac{1}{2(\beta - 1)}}} \}  = \varepsilon^{3/4}$.

        \textbf{From term $\circledThree$}, we find the maximum noise level $\Delta$ at which Algorithm \ref{algorithm} can still achieve the desired accuracy:
        \begin{align*}
            &\circledThree: \quad \frac{N \kappa d^2 \Delta^2}{h^2 \kappa d L} \leq \varepsilon \quad \Rightarrow \quad \Delta^2 \lesssim \frac{\varepsilon^{3}  d}{d^2} \quad \Rightarrow \quad \Delta \lesssim \frac{\varepsilon^{3/2}}{\sqrt{d}}.
        \end{align*}

        \item When $\beta < \frac{7}{3}$, we have that $h \lesssim \min \{\varepsilon^{3/4}, \varepsilon^{1/(\beta - 1)}, \frac{\varepsilon^{\frac{3}{4(\beta - 1)}}}{d^{\frac{1}{2(\beta - 1)}}} \}  = \varepsilon^{1/(\beta - 1)}$.

        \textbf{From term $\circledThree$}, we find the maximum noise level $\Delta$ at which Algorithm \ref{algorithm} can still achieve the desired accuracy:
        \begin{align*}
            &\circledThree: \quad \frac{N \kappa d^2 \Delta^2}{h^2 \kappa d L} \leq \varepsilon \quad \Rightarrow \quad \Delta^2 \lesssim \frac{\varepsilon^{\frac{3}{2} + \frac{2}{\beta - 1}}  d}{d^2} \quad \Rightarrow \quad \Delta \lesssim \frac{\varepsilon^{\frac{3\beta + 1}{4(\beta - 1)}}}{\sqrt{d}}.
        \end{align*}
    \end{itemize}
    The oracle complexity $T$ in this case has the following form:
    \begin{equation*}
        T = N \cdot B = \mathcal{O}\left( \sqrt{\frac{d^2 L R^2}{\varepsilon}} \right).
    \end{equation*}

    \paragraph{$\boxed{\textbf{\it Case 4, when }B > 4\kappa d}$} we have the following convergence rate:
    \begin{equation*}
        \expect{f(x_{N})} - f^* \lesssim \underbrace{\frac{L R^2}{N^2}}_{\circledOne} + \underbrace{\frac{N d \kappa  L^2 h^2}{ L B}}_{\circledTwo} + \underbrace{\frac{N \kappa d^2 \Delta^2}{h^2 L B}}_{\circledThree} +  \underbrace{\Tilde{R} \kappa_\beta L_\beta h^{\beta - 1}}_{\circledFour} +  \underbrace{\frac{N \kappa_\beta^2 L_\beta^2 h^{2(\beta - 1)}}{L}}_{\circledFive}.
    \end{equation*}
    \textbf{From term $\circledOne$}, we find iteration number $N$ required for Algorithm \ref{algorithm} to achieve $\varepsilon$-accuracy:
    \begin{align}
        \circledOne: \quad \frac{L R^2}{N^2} \leq \varepsilon \quad & \Rightarrow \quad N \geq \sqrt{\frac{L R^2}{\varepsilon}};
        \nonumber \\
         N &= \mathcal{O}\left( \sqrt{\frac{L R^2}{\varepsilon}} \right). \label{eq:proof_iterations_4}
    \end{align}
    \textbf{From terms $\circledTwo$, $\circledFour$ and $\circledFive$} we find the smoothing parameter $h$:
    \begin{align*}
        &\circledTwo: \quad \frac{N d \kappa  L^2 h^2}{L B} \leq 
        \varepsilon \quad \Rightarrow \quad h^2 \overset{\eqref{eq:proof_iterations_4}}{\lesssim} \frac{\varepsilon^{3/2} B}{d} \quad  \Rightarrow \quad h \lesssim \frac{\varepsilon^{3/4} B^{1/2}}{d^{1/2}} ;
        \nonumber \\
        &\circledFour: \quad \Tilde{R} \kappa_\beta L_\beta h^{\beta - 1} \leq \varepsilon \quad \Rightarrow \quad h \lesssim \varepsilon^{1/(\beta - 1)};
        \nonumber \\
        &\circledFive: \quad \frac{N \kappa_\beta^2 L_\beta^2 h^{2(\beta - 1)}}{L} \leq \varepsilon \quad \Rightarrow \quad h^{2(\beta - 1)} \overset{\eqref{eq:proof_iterations_4}}{\lesssim} d^{-1}\varepsilon^{3/2} \quad h \lesssim \frac{\varepsilon^{\frac{3}{4(\beta - 1)}}}{d^{\frac{1}{2(\beta - 1)}}}.
    \end{align*}
    The smoothing parameter can be estimated as $h \lesssim \min \{\varepsilon^{3/4}, \varepsilon^{1/(\beta - 1)}, \frac{\varepsilon^{\frac{3}{4(\beta - 1)}}}{d^{\frac{1}{2(\beta - 1)}}} \}  = \varepsilon^{1/(\beta - 1)}$.
    \textbf{From term $\circledThree$}, we find the maximum noise level $\Delta$ (via batch size $B$) at which Algorithm~\ref{algorithm} can still achieve the desired accuracy:
    \begin{align*}
        &\circledThree: \quad \frac{N \kappa d^2 \Delta^2}{h^2 L B} \leq \varepsilon \quad \Rightarrow \quad \Delta^2 \lesssim \frac{\varepsilon^{\frac{3}{2} + \frac{2}{\beta - 1}}  B}{d^2} \quad \Rightarrow \quad \Delta \lesssim \frac{\varepsilon^{\frac{3\beta + 1}{4(\beta - 1)}} B^{1/2}}{d}
    \end{align*}
    or let's represent the batch size $B$ via the maximum noise level $\Delta$:
    \begin{align*}
        &\circledThree: \quad \frac{N \kappa d^2 \Delta^2}{h^2 L B} \leq \varepsilon \quad \Rightarrow \quad B \overset{\eqref{eq:proof_iterations_4}}{\gtrsim} \frac{\kappa d^2 \Delta^2}{\varepsilon^{\frac{3}{2} + \frac{2}{\beta - 1}}} \quad \Rightarrow \quad B = \mathcal{O}\left( \frac{d^2 \Delta^2}{\varepsilon^{\frac{3}{2} + \frac{2}{\beta - 1}}} \right).
    \end{align*}
    Then the oracle complexity $T$ in this case has the following form: 
    \begin{equation*}
        T = N \cdot B = \max \left\{\mathcal{O}\left( \sqrt{\frac{d^2 L R^2}{\varepsilon}} \right), \mathcal{O}\left( \frac{d^2 \Delta^2}{\varepsilon^{2 + \frac{2}{\beta - 1}}} \right) \right\}.
    \end{equation*}
    \qed
    \end{proof}

\section{Discussion and Further Work}
\label{sec:Discussion}
Section \ref{sec:Main_results} focuses on solving the convex deterministic black-box optimization problem \eqref{eq:init_problem}, however, when constructing the gradient-free algorithm (see Subsection \ref{subsec:ZO_AccSGD}) is based on a first-order method that solves the convex stochastic optimization problem \eqref{eq:stoc_init_problem} due to the arising of artificial ``stochasticity'' in the gradient approximation \eqref{eq:gradient_approx}. It is not difficult to show that the results of Theorem \ref{th:gradient_free} will be robust if the original problem of Section \ref{sec:Main_results} is replaced by a stochastic black-box optimization problem, since there will already be two stochasticities in the analysis that can be formally combined into one~$(\tilde{\xi}, \ee)$. 

If we pay attention to the results presented in the works of \cite{Bach_2016,Akhavan_2020,Novitskii_2021} and others, we can see that they ``struggle'' for oracle complexity $T$. However, in high dimensional problems, it is important to be able to distribute the computational power loads, thereby reducing the time taken to solve a particular problem. Therefore, with the help of a not tricky technique, namely with the help of batching technique and using the accelerated algorithm as a base (in particular, Accelerated Stochastic Gradient Descent with accelerated of Nesterov, see Theorem \ref{th:First_order}), we managed to improve the estimate of the number of consecutive iterations $N \sim \varepsilon^{-1/2}$ to achieve the desired accuracy $\varepsilon$ of the solution of the original problem, without worsening the oracle complexity $T$, and moreover improving in terms of dimensionality $d$ for a class of convex optimization problems. It is due to this fact, namely the ability to improve one optimality criterion without compromising the second one, that recently authors of works on gradient-free optimization algorithms have been evaluating the efficiency of their algorithms by three optimality criteria at~once~\cite{Gasnikov_2022}: iteration complexity, total number of calls to the gradient-free oracle, and maximum noise level $\Delta$ at which it is still possible to achieve~desired~accuracy.

\al{Theorem~\ref{th:gradient_free} clearly demonstrates the advantage of Algorithm~\ref{algorithm} (which uses enhanced smoothness information) over existing gradient-free algorithms that do not use this information. For example, when using the proposed algorithm (in the case, for example, when the smoothness of the function $\beta = 5$ and the batch size $B=4d\kappa$) we will converge to the same error floor as the existing algorithms (solving smooth problems $\beta = 2$), only faster. However, we want our algorithm not only to converge faster, but also more accurately (to achieve a better error floor). This is exactly what we have achieved (which is really not trivial). We show that using $B > 4d\kappa$ the maximum noise level (or at a fixed noise level, error floor) accounts for smoothness order and batching costs, preserving optimality on iteration complexity.}

We see the following directions as the development of this work: obtaining convergence results for a $\mu$-strongly convex black-box optimization problem. For this formulation of the problem there are already some results presented in~\cite{Akhavan_2023}, we expect that using similar reasoning, namely generalizing the convergence results of the accelerated algorithm \cite{Vaswani_2019} for solving a strongly convex stochastic optimization problem to the case with a biased gradient oracle (see Definition \ref{Definition_1}) and using the kernel approximation \eqref{eq:gradient_approx}, we will be able to improve the performance of \cite{Akhavan_2023} in terms of iteration complexity $N$, achieving the same oracle complexity estimates $T$, and provide an explicit condition on the maximum noise level $\Delta$. Another direction of development of our work is to improve oracle complexity $T$ for convex and strongly convex black-box optimization problem. It is worth noting that in the class of convex functions we managed to improve oracle complexity, but this upper bound does not match the lower bound presented in \cite{Akhavan_2020,Novitskii_2021}. Finally, the last direction that looks promising at the moment is the study of the maximum allowable noise level. In this paper, we have provided a maximum noise level at which convergence to the desired accuracy $\varepsilon$ is guaranteed; however, we have not guaranteed the optimality of this estimate since the upper bound on the noise level is not yet known. Furthermore, we expect that this estimator can be improved by using a different concept of a gradient-free oracle, in particular when the oracle can output the objective function value with some bounded adversarial deterministic noise (see \cite{Dvinskikh_2022} for details). In this case, we can also expect an improvement in oracle complexity, since the gradient approximation with a central finite difference structure will already have access to two-point feedback.

\vspace{-1em}
\section{Experiments}
\label{sec:Experiments}

In this section, we verify the performance of the proposed algorithm in Section~\ref{sec:Main_results} on model functions as well as on functions of interest in machine learning. In all experiments as the Kernel $K(r)$ of gradient approximation~\eqref{eq:gradient_approx}  we use the already standard function, namely Legendre polynomials, for which it was proved in the paper \cite{Bach_2016} that the constants $\kappa$ and $\kappa_\beta$ do not depend on the dimensionality $d$, but only on the smoothness order~of~$\beta$. We have the following values for different~$\beta$: 
\begin{align*}
    K(r) &= \frac{15r}{4}(5 - 7r^2) & \text{for } \beta = 3, 4;\\
    K(r) &= \frac{195r}{16}(99r^4 - 126r^2 + 35) & \text{for } \beta = 5, 6.
\end{align*}

\subsection{System of linear equations}
We consider solving a system $p$ of linear equations, where the problem \eqref{eq:init_problem} looks like $\min_{x \in \mathbb{R}^d} := \left\|  Ax - b \right\|^2$, where $x \in \mathbb{R}^d$, $A \in \mathbb{R}^{p \times d}$ and $b \in \mathbb{R}^p$. Figure~\ref{fig:Approximations} shows the advantage of the Kernel approximation for the case when the function has a higher order of smoothness. Here we optimize $f(x)$ with the parameters: $d = 64$ (dimension of the problem); $B = 50$ (batch size); $\Delta = 10^{-5}$ (noise level), $\eta = 0.02$ (step-size); $h = 0.5$ (smoothing parameter). Figure~\ref{fig:Comparison_quadratic} shows the advantage of the algorithm proposed in Section~\ref{sec:Main_results} compared to existing accelerated algorithms: ARDFDS from \cite{Gorbunov_2022}, ZO-VARAG from \cite{Chen_2020}. We can see that ZO-VARAG hits the asymptote rather quickly, since the work of \cite{Chen_2020} did not consider the problem formulation with adversarial noise, i.e., the algorithm is not adaptive to noise. It is also worth noting that our algorithm outperforms ARDFDS from \cite{Gorbunov_2022} in particular because ZO-AccSGD uses information about the high smoothness of the function.

\begin{figure}
    \centering
    \begin{subfigure}[b]{0.45\textwidth}
        \includegraphics[width=\textwidth]{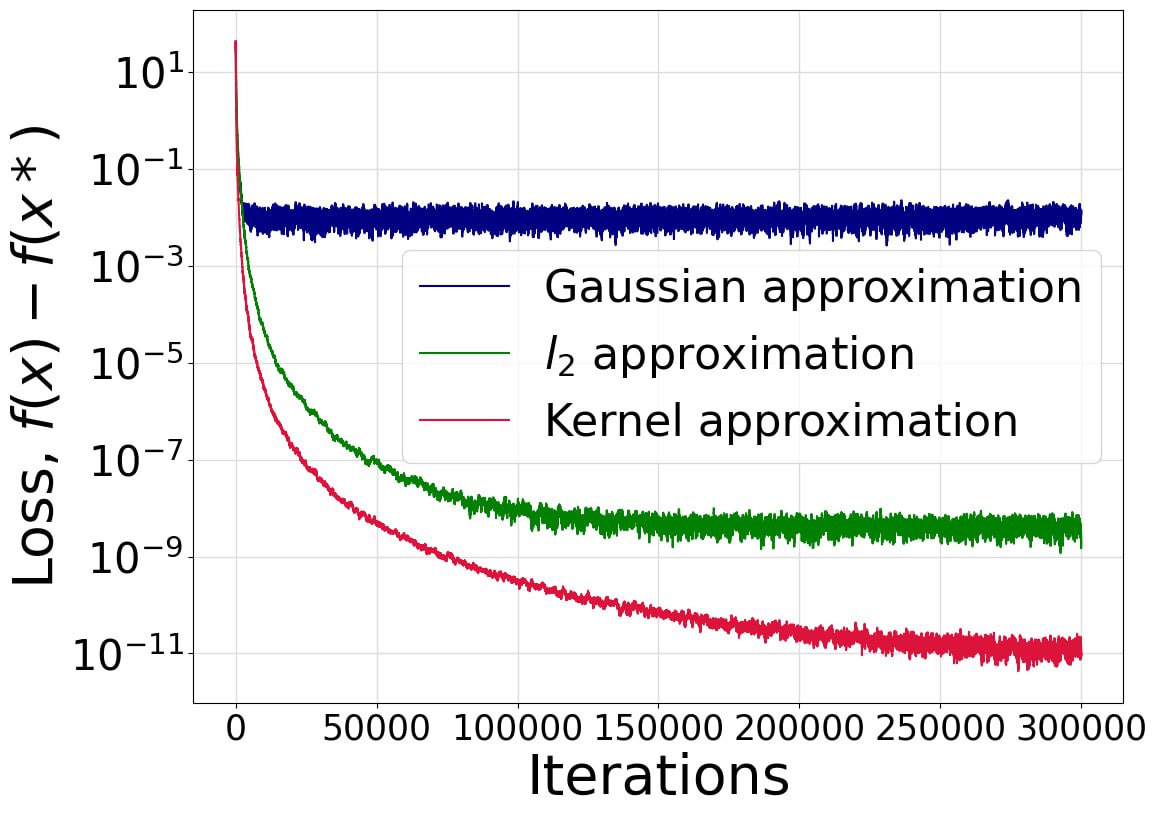}
        \caption{Effect of gradient approximations on the convergence}
        \label{fig:Approximations}
    \end{subfigure}
    ~ 
    \begin{subfigure}[b]{0.45\textwidth}
        \includegraphics[width=\textwidth]{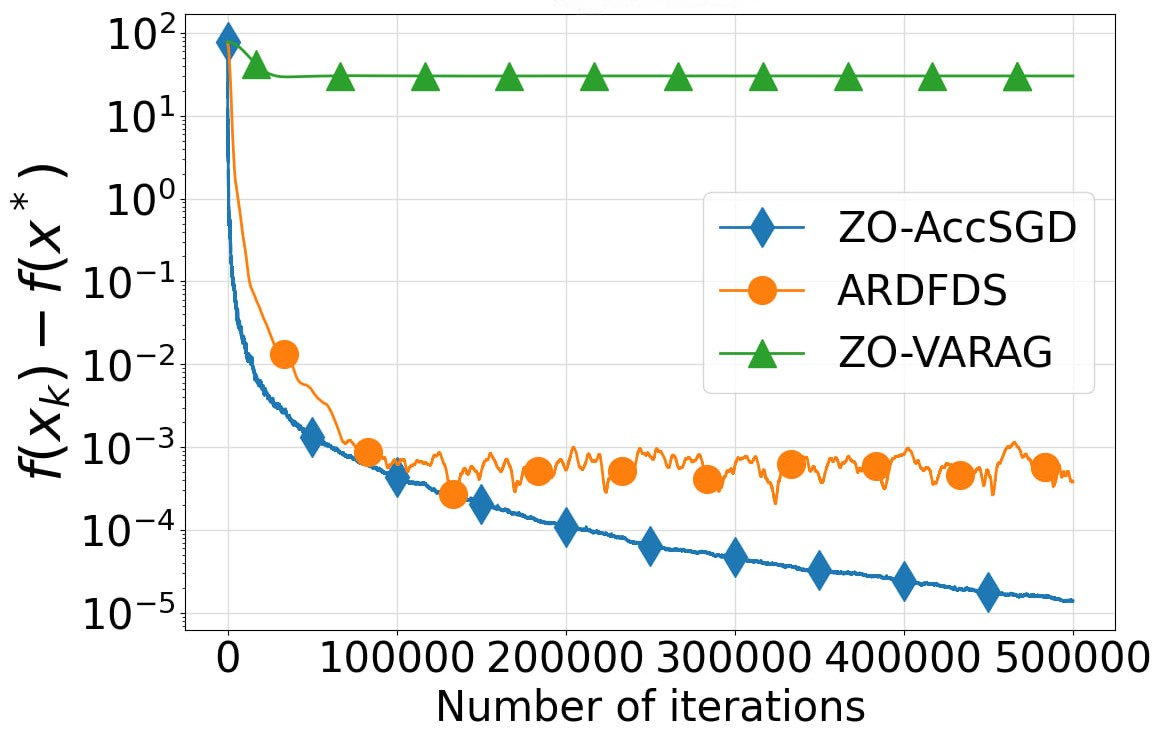}
        \caption{Comparison of convergence of accelerated algorithms}
        \label{fig:Comparison_quadratic}
    \end{subfigure}
    \caption{Numerical experiments}\label{fig:animals}
\end{figure}

From Figure~\ref{fig:ovb_squared_norm} we can see the same convergence rates for the first 20-30 iterates with $B = 4d \kappa = 9000$ and $B = 12000$, due to the same values of $\rho_B$ (optimal in terms of iteration complexity, see Theorem~\ref{th:gradient_free}). However, going further through iterations, the error floor separation becomes more clear. This validates our theoretical results: the overbatching effect does improve the distance to the solution $x^*$ of the proposed Algorithm \ref{algorithm}.

\begin{figure}[H]
    \centering
    \includegraphics[width=0.75\textwidth]{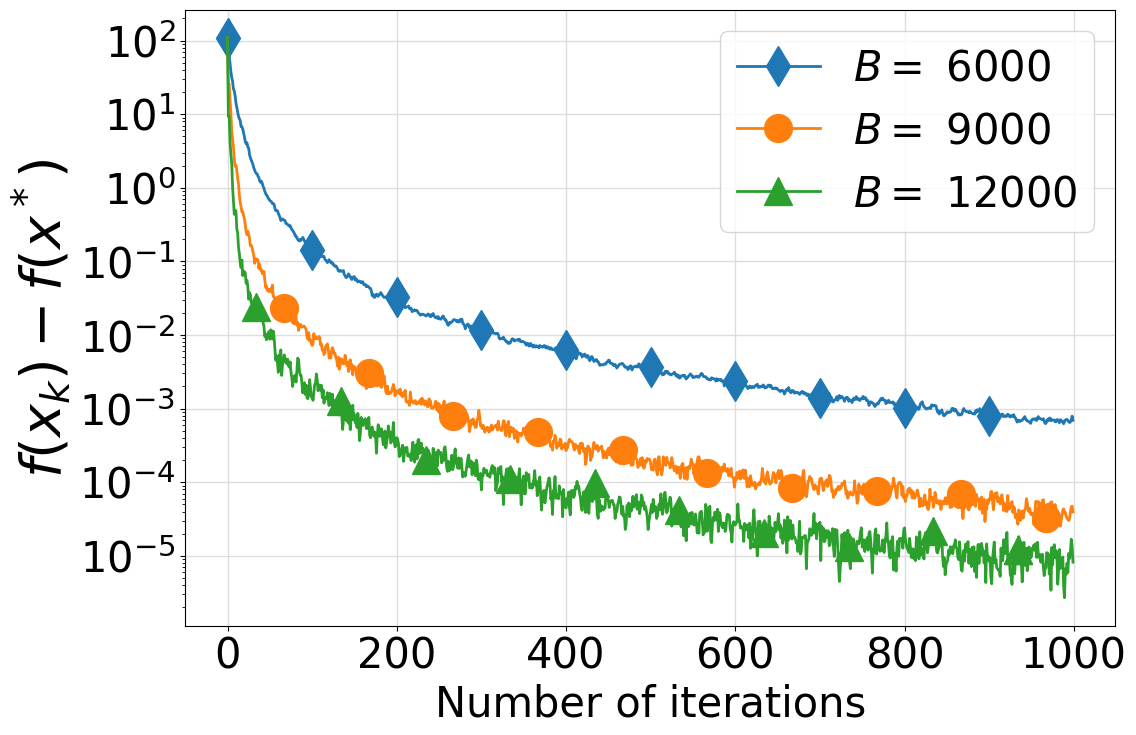} 
    \caption{Demonstration of the overbatching effect on a model problem of solving a linear system of $p$ equations}
    \label{fig:ovb_squared_norm}
\end{figure}

\subsection{Logistic regression}

This Subsection is devoted to numerical experiments on the optimization of a function of interest in machine learning, namely logistic regression:
\begin{equation}
\label{problem:log_reg}
    \min \limits_{x \in \mathbb{R}^d} f(x) = \frac{1}{M} \sum \limits_{i = 1}^M f_i(x),
\end{equation}
where $f_i(x) = \log \big(1 + \exp(-y_i \cdot (Ax)_i)\big)$ stands for the loss on the $i$-th data point, $A \in \mathbb{R}^{M \times d}$ is an instances matrix, $y \in \{-1, 1\}^{M}$ is a label vector and $x \in \mathbb{R}^d$ is a vector of weights. It is easy to show, that logistic regression function is $L$-smooth with $L = \frac{1}{4M} \sqrt{\lambda_{\max} (A^TA)}$, where $\lambda_{\max} (A^TA)$ denotes the largest eigenvalue of the matrix $A^TA$. 

In our experiments we use data from LIBSVM\cite{LIBSVM} library, specifically such datasets as \textit{phishing}, \textit{diabetes} and \textit{hearts}. The information about them can be found in the Table \ref{tab:data}:
\begin{table}[H]
    \centering
    \begin{tabular}{|c|c|c|c|}
        \hline
         & \textit{phishing} & \textit{diabetes} & \textit{hearts} \\
         \hline
        Size ($M$) & 11055 & 768 & 270 \\
        \hline
        Dimension ($d$) & 68 & 8 & 13 \\
        \hline
    \end{tabular}
    \caption{\vspace{0.5 em} Summary of used datasets}
    \label{tab:data}
\end{table}

\vspace{-2em}
In all tests we use standard solvers from scipy library to get an accurate approximation of the solution $f(x^*)$. Next, we set $x_0$ such that $f(x_0) - f(x^*) \sim 1$, and in all experiments smoothing parameter $h$ is equal to $10^{-10}$. We compare the results of our Algorithm~\ref{algorithm} (ZO-AccSGD) with two methods presented in \cite{Gorbunov_2019}: RDFDS and its accelerated version ARDFDS. To find better learning rates we grid searched it for all methods and then took the best parameters for each method. The results for different batching constant $B$ are presented in Figure \ref{fig:additional_exp}:

\begin{figure}[h!]

\begin{subfigure}{0.45\textwidth}
    \centering
    \includegraphics[width=\textwidth]{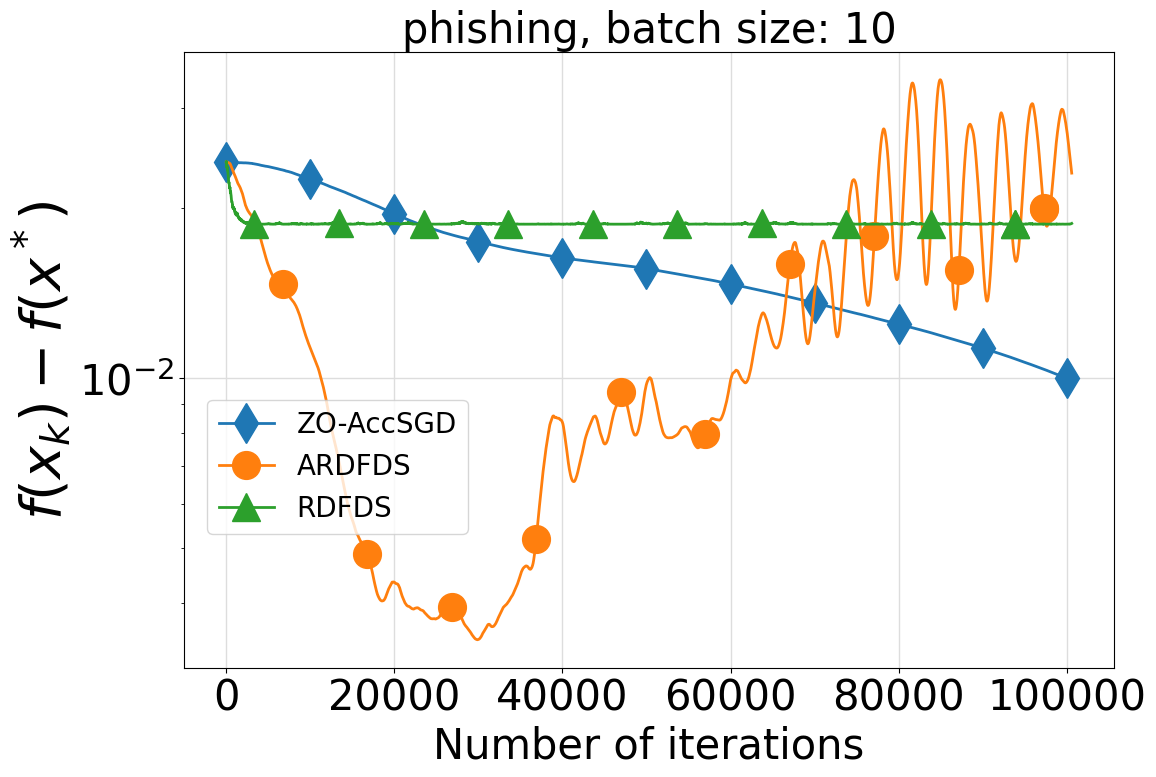}
\end{subfigure}
\hfill
\begin{subfigure}{0.45\textwidth}
    \centering
    \includegraphics[width=\textwidth]{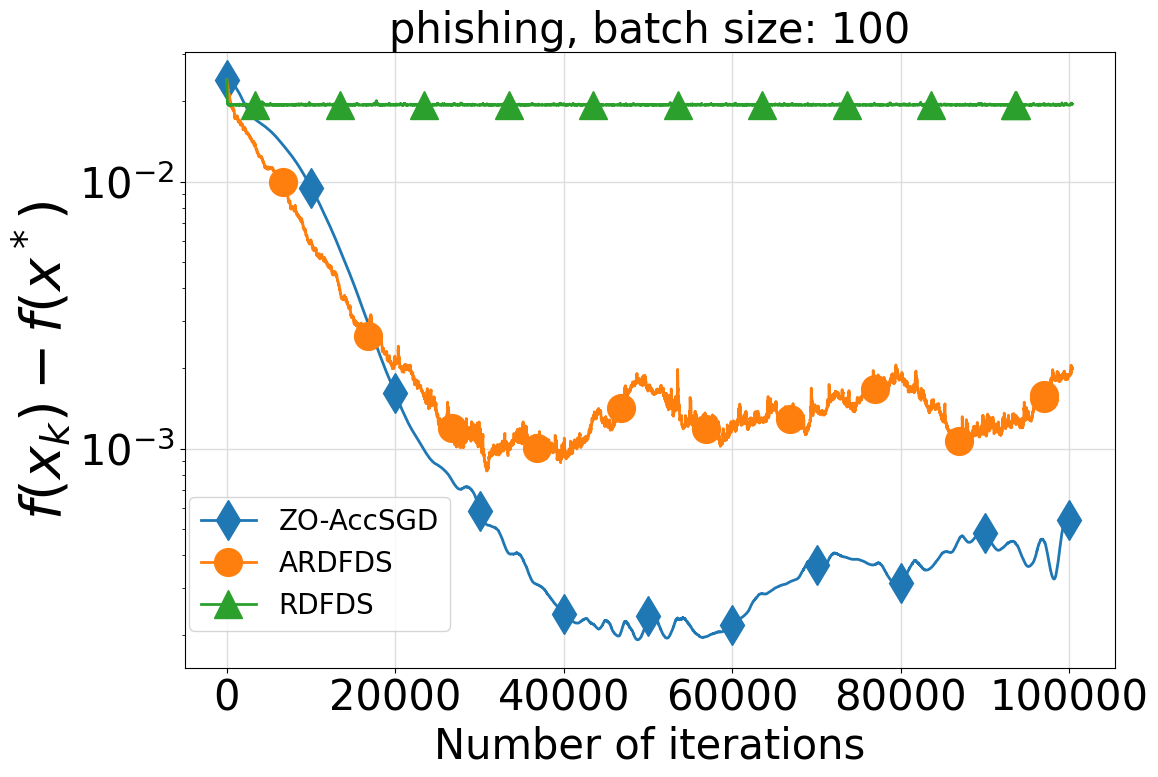}
\end{subfigure}
\begin{subfigure}{0.3\textwidth}
    \centering
    \includegraphics[width=\textwidth]{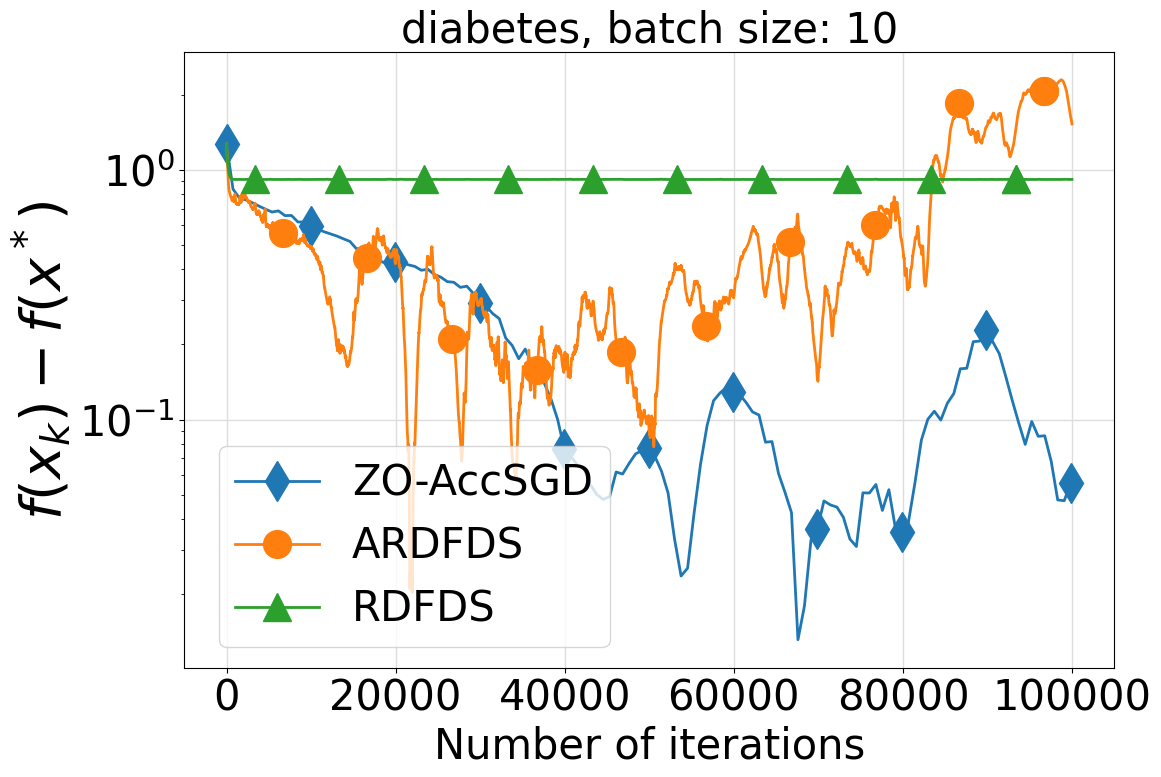}
\end{subfigure}
\hfill
\begin{subfigure}{0.3\textwidth}
    \centering
    \includegraphics[width=\textwidth]{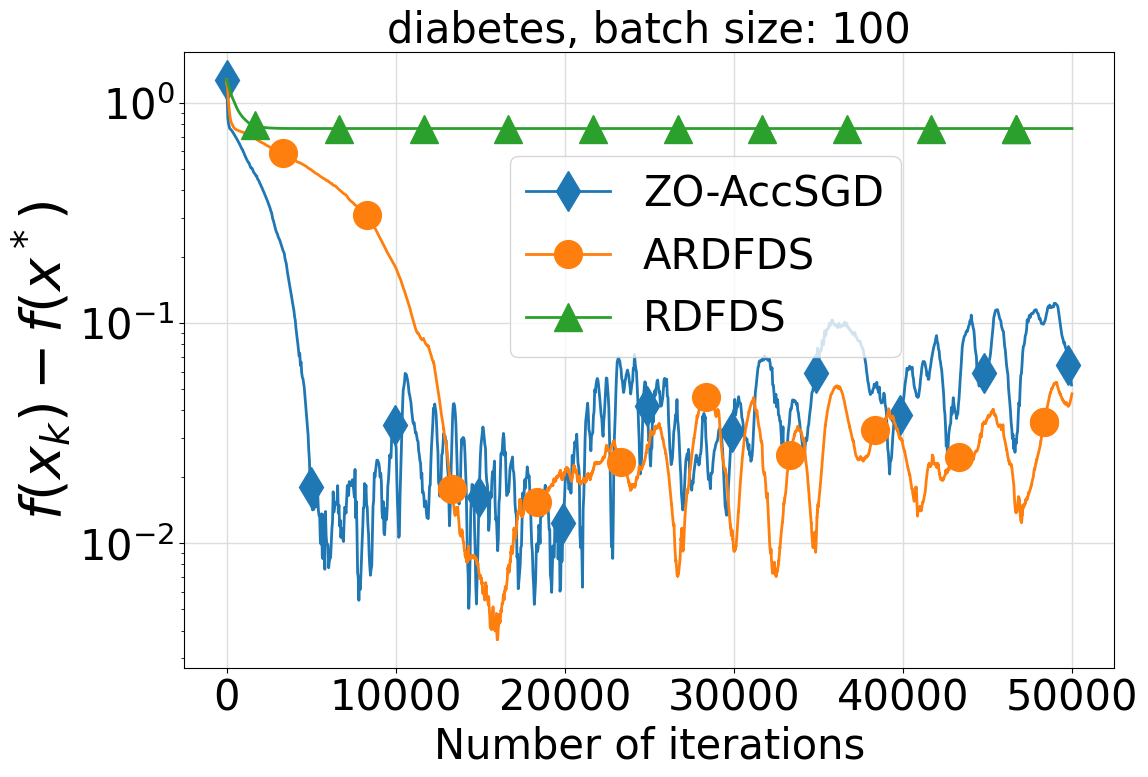}
\end{subfigure}
\hfill
\begin{subfigure}{0.3\textwidth}
    \includegraphics[width=\textwidth]{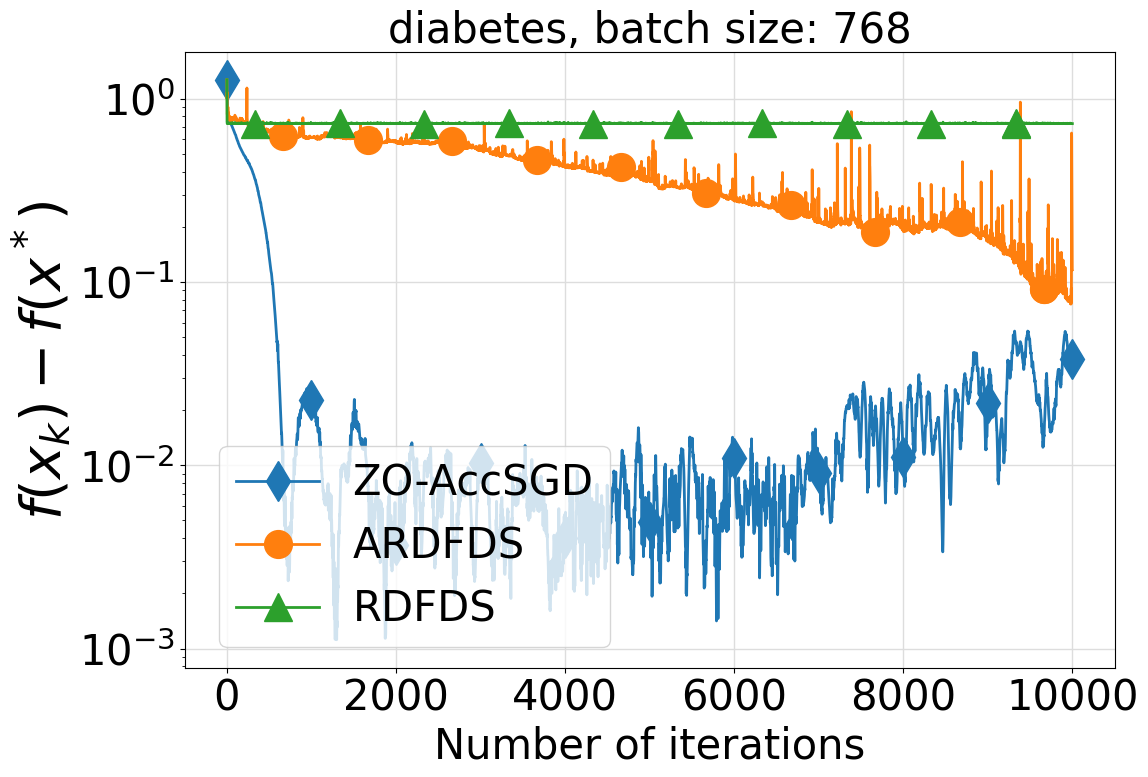}
\end{subfigure}
\begin{subfigure}{0.3\textwidth}
    \centering
    \includegraphics[width=\textwidth]{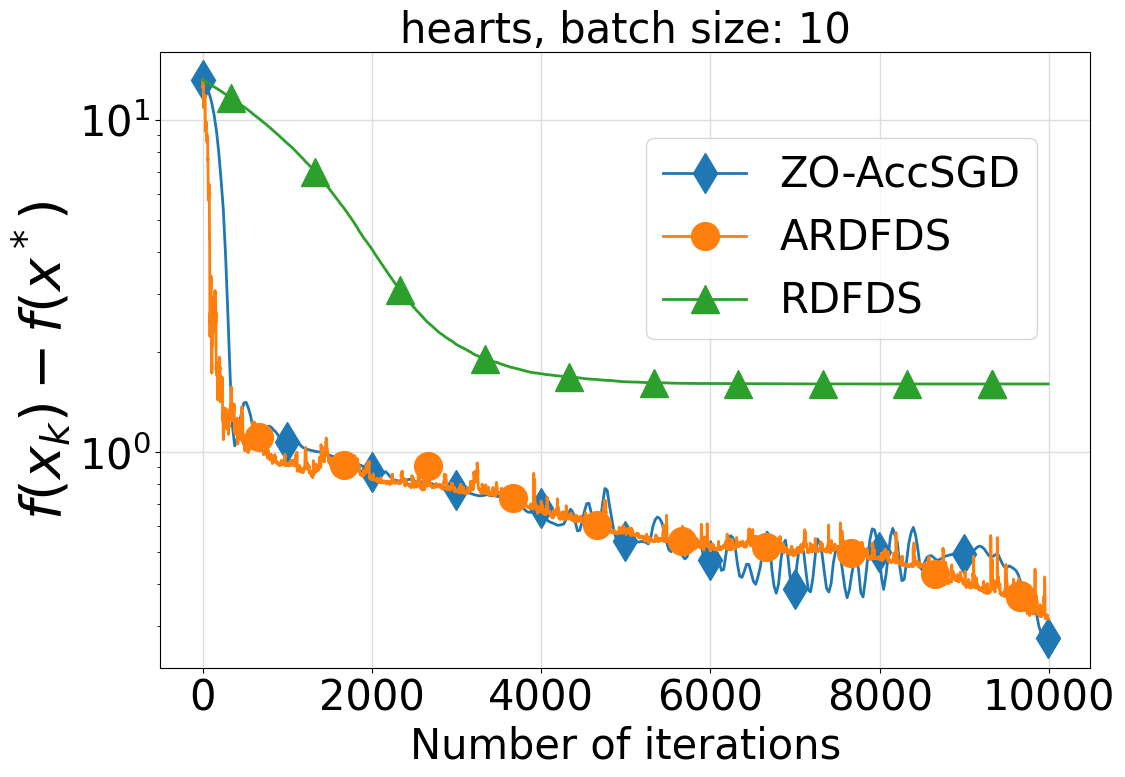}
\end{subfigure}
\hfill
\begin{subfigure}{0.3\textwidth}
    \centering
    \includegraphics[width=\textwidth]{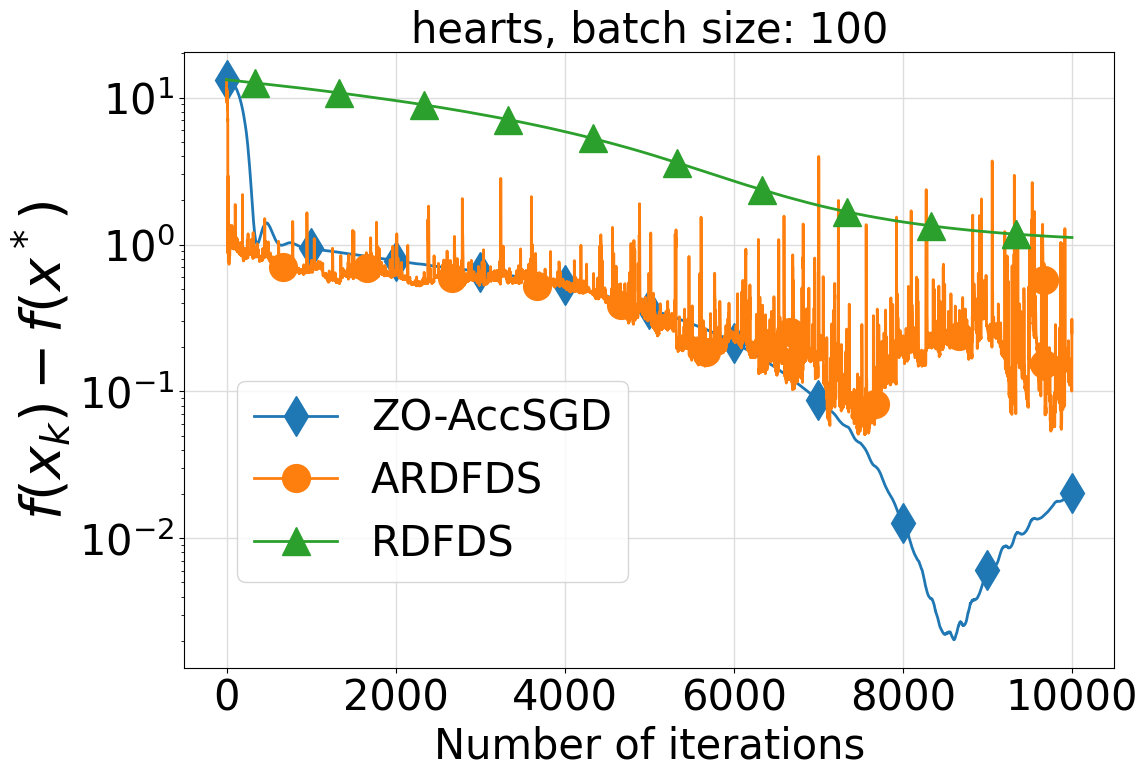}
\end{subfigure}
\hfill
\begin{subfigure}{0.3\textwidth}
    \includegraphics[width=\textwidth]{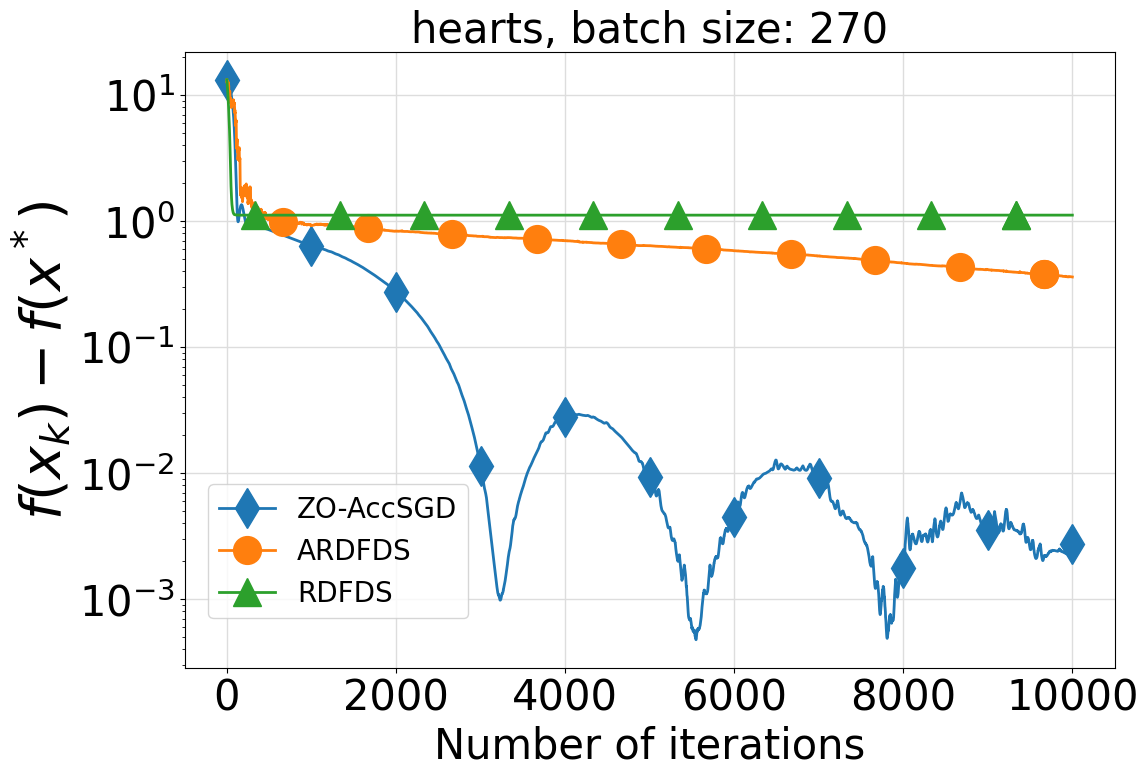}
\end{subfigure}
        
\caption{Numerical results for solving logistic regression problem \eqref{problem:log_reg} for different datasets}
\label{fig:additional_exp}

\end{figure}

As an accelerated method, our Algorithm \ref{algorithm} is sensible to stochasticity, however compared to ARDFDS has a better robustness. Also, it is clear from graphs that bigger batchsize implies lower error floor and faster convergence rate. Even with gridsearched parameters for stepsize in all methods, ZO-AccSGD (see Algorithm \ref{algorithm}) performance is better compared to RDFDS and ARDFDS ones.

\paragraph{Technical aspects.} All experiments were written on Python and are available in anonymous repository at the following link: \url{https://github.com/MetalistForever/ZO-AccSGD}. They were run on the local machine with mobile CPU Ryzen 4800H. Average time for single run with number of iterations $N = 10^5$ and batchsize $B = 10$ on the \textit{hearts} dataset is 9 min ($\approx$ 205 it/s), nonetheless the are more time-consuming tests, for example to run optimizer on \textit{phishing} dataset with $N = 10^5$ and $B = 100$ it took more than an hour.

\vspace{-1em}
\section{Conclusion}
\label{sec:Conclusion}
In this paper, we proposed a novel gradient-free algorithm (see Algorithm \ref{algorithm}) that improves the iteration $N$, oracle complexities $T$, and explicitly defines the maximum noise level $\Delta$ at which the algorithm can still be guaranteed to converge to the desired $\varepsilon$ accuracy in a class of convex black-box optimization problem where the function is not just smooth but has a higher order of smoothness. Our approach for the gradient-free algorithm was based on the work of \cite{Vaswani_2019}, however, due to the biased gradient approximation (Kernel approximation), we generalized the convergence results of this work to the gradient oracle with bias, and applied a batting technique to improve the first term in the convergence of \cite{Vaswani_2019} (this result may be of independent interest). In the experiments section, we confirmed our theoretical results obtained in this paper. In addition, we considered possible developments of this paper.

\begin{acknowledgements}
The work of Alexander Gasnikov, Aleksandr Lobanov  was supported by a grant for research centers in the field of artificial intelligence, provided by the Analytical Center for the Government of the Russian Federation in accordance with the subsidy agreement (agreement identifier 000000D730321P5Q0002) and the agreement with the Ivannikov Institute for System Programming of the Russian Academy of Sciences dated November 2, 2021 No. 70-2021-00142.
\end{acknowledgements}


\bibliographystyle{spmpsci}
{
\normalsize
\bibliography{3_references}
}

\newpage

\appendix  
    \begin{center}
        \LARGE \bf APPENDIX
    \end{center}
    
\section{Auxiliary Facts and Results}

    In this section we list auxiliary facts and results that we use several times in our~proofs.
    
    \subsection{Squared norm of the sum} For all $a_1,...,a_n \in \mathbb{R}^d$, where $n=\{2,3\}$
    \begin{equation}
        \label{eq:squared_norm_sum}
        \norms{a_1 + ... + a_n }^2 \leq n \norms{ a_1 }^2 + ... + n \norms{a_n}^2.
    \end{equation}

    \subsection{Fenchel-Young inequality} For all $a,b\in\mathbb{R}^d$ and $\lambda > 0$
    \begin{equation}
        \dotprod{a}{b} \leq \frac{\|a\|_2^2}{2\lambda} + \frac{\lambda\|b\|_2^2}{2}.\label{eq:fenchel_young_inequality}
    \end{equation}

    \subsection{$L$ smoothness function}
        Function $f$ is called $L$-smooth on $\mathbb{R}^d$ with $L~>~0$ when it is differentiable and its gradient is $L$-Lipschitz continuous on $\mathbb{R}^d$, i.e.\ 
        \begin{equation}
            \norms{\nabla f(x) - \nabla f(y)} \leq L \norms{x - y},\quad \forall x,y\in \mathbb{R}^d. \label{eq:L_smoothness}
        \end{equation}
         It is well-known that $L$-smoothness implies (see e.g., Assumption \ref{ass:L_smooth})
        \begin{eqnarray*}
            f(y) \leq f(x) + \dotprod{\nabla f(x)}{y-x} + \frac{L}{2}\norms{y-x}^2\quad \forall x,y\in \mathbb{R}^d,
        \end{eqnarray*} 
        and if $f$ is additionally convex, then
        \begin{eqnarray*}
            \norms{ \nabla f(x) - \nabla f(y) }^2 \leq 2L \left( f(x) - f(y) - \dotprod{ \nabla f(y)}{x-y} \right) \quad \forall x,y \in \mathbb{R}^d. 
        \end{eqnarray*}

    \subsection{Wirtinger-Poincare inequality}
        Let $f$ is differentiable, then for all $x \in \mathbb{R}^d$, $h \ee \in S^d_2(h)$:
        \begin{equation}\label{eq:Wirtinger_Poincare}
            \expect{f(x+ h \ee)^2} \leq \frac{h^2}{d} \expect{\norms{\nabla f(x + h \ee)}^2}.
        \end{equation}

    \subsection{Taylor expansion} Using the Taylor expansion we have
    \begin{equation}
        \label{Taylor_expansion_1}
        \nabla_z f(x+ h r \uu) = \nabla_z f(x) + \sum_{1 \leq |n| \leq l-1} \frac{(r h)^{|n|}}{n!} D^{(n)} \nabla_z f(x) \uu^n + R(h r \uu),
    \end{equation}
    where by assumption 
    \begin{equation}\label{Taylor_expansion_2}
        |R(h r \uu)| \leq \frac{L}{(l-1)!}  \norms{h r \uu}^{\beta - 1} = \frac{L}{(l-1)!}|r|^{\beta - 1} h^{\beta - 1} \norms{\uu}^{\beta - 1}.
    \end{equation}
    
    \subsection{Kernel property} If $\ee$ is uniformly distributed on $S_2^d(1)$ we have $\mathbb{E}[\ee \ee^{\text{T}}] = (1/d)I_{d \times d}$, where $I_{d \times d}$ is the identity matrix. Therefore, using the facts $\mathbb{E}[r K(r)] = 1$ and $\mathbb{E}[r^{|n|} K(r)] = 0$ for $2 \leq |n| \leq l$ we have
        \begin{equation}\label{Kernel_property}
            \mathbb{E}\left[ \frac{d}{h} \left( \dotprod{\nabla f(x)}{h r \ee} + \sum_{2 \leq |n| \leq l} \frac{(r h)^{|n|}}{n!} D^{(n)} f(x) \ee^n \right) K(r) \ee  \right] = \nabla f(x).
        \end{equation}

    \subsection{Bounds of the Weighted Sum of Legendre Polynomials}
    Let $\kappa_\beta = \int |u|^\beta |K(u)| du$ and set $\kappa = \int K^2(u) du$. Then if $K$ be a weighted sum of Legendre polynomials, then it is proved in (see Appendix A.3, \cite{Bach_2016}) that $\kappa_\beta$ and $\kappa$ do not depend on $d$, they~depend~only~on~$\beta$, such that for $\beta \geq 1$: 
        \begin{equation}\label{eq_remark_1}
            \kappa_\beta \leq 2 \sqrt{2} (\beta-1),
        \end{equation}
        \begin{equation}\label{eq_remark_2}
            \kappa \leq 3 \beta^{3}.
        \end{equation}

\section{Proof of Theorem \ref{th:First_order}}\label{app:proof_th1}
In this section, we present a detailed description of the derivation of the results of Theorem \ref{th:First_order}, which generalize the result of Lemma \ref{lem:Vaswani} to the case of a biased gradient oracle (see Definition \ref{Definition_1}). Therefore, our analysis will rely on the proof of Theorem 1 of \cite{Vaswani_2019}, working through the summands responsible for the accumulation of noise in the gradient oracle. Before starting the proof, we recall that the update equations for SGD with Nesterov acceleration have the following general form:
\begin{align}
    \xkk &= \yk - \eta \gg(y_{k}, \xi_{k}) \label{eq:grad-step_1}\\
    y_k  &= \agk z_k + (1 - \agk)\xk  \label{eq:grad-step_2}\\
    \vkk &= \bgk z_k + (1 - \bgk) \yk - \gk \eta \gg(y_{k}, \xi_{k})\label{eq:grad-step_3}
\end{align}
where $\gg(y_{k}, \xi_{k})$ is a biased gradient oracle (see Definition \ref{Definition_1}) and updates of the parameters:
\begin{align}
    \gamma_{k} &= \frac{1}{\rho} \left( 1 + \frac{\zeta_k (1 - \alpha_k)}{\alpha_k} \right),\label{eq:gamma-update}\\
    \bgk &\geq 1 - \gk \mu \eta,\label{eq:beta-update}\\
    a_{k + 1} &= \gamma_k \sqrt{\eta \rho} b_{k + 1},\label{eq:a-update}\\
    b_{k + 1} &= \frac{b_k}{\sqrt{\zeta_k}},\label{eq:b-update}\\
    \alpha_k &= \frac{\gamma_{k} \bgk b_{k + 1}^2 \eta}{\gamma_{k} \bgk b_{k + 1}^2 \eta + a_k^2}\label{eq:alpha-update}.
\end{align} 
We now prove the following lemma assuming that the function $f(\cdot)$ is convex~and~smooth. 
\begin{lemma}\label{lemma:result}
Let convex function satisfy Assumption \ref{ass:L_smooth} and the gradient oracle $\gg(y_k, \xi_k)$ of Definition \ref{Definition_1} satisfies Assumptions \ref{Ass:Bounded_bias} and \ref{Ass:Gradient_noise}. Then, using the updates in Equation~\eqref{eq:grad-step_1}-\eqref{eq:grad-step_3} and setting the parameters according to Equations~\ref{eq:gamma-update}-~\ref{eq:b-update}, if $\eta \leq \frac{1}{\rho L}$ and $\Phi_{k}:= \expect{f(x_{k})} - f^*$, then the following relation holds:
\begin{align*}
b_N^2 \gamma_{N-1}^2 \Phi_{N} &\leq  \frac{a_{0}}{\rho \eta} \Phi_0 +  \frac{b_{0}^2}{\rho \eta} R_0^2  + \sum_{k=0}^{N-1}  \frac{b_{k+1}^2 \gamma_k}{\rho} \Tilde{R} \norms{\expect{\gg_k} - \nabla f(y_k)} \\
&+  \sum_{k=0}^{N-1} \frac{a_{k+1}^2 \sigma^2}{2 \rho^2} + \sum_{k=0}^{N-1} \frac{1}{2 \rho} \norms{\expect{\gg_k} - \nabla f(y_k)}^2.
\end{align*}
\end{lemma}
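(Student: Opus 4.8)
The plan is to follow the potential-function (Lyapunov) argument used in the unbiased proof of \cite{Vaswani_2019}, inserting and tracking the additional terms generated by the bias $\bb(y_k) = \expect{\gg_k} - \nabla f(y_k)$, where I abbreviate $\gg_k := \gg(y_k,\xi_k)$. I would work with the potential $V_k := \frac{a_k^2}{\rho\eta}\Phi_k + \frac{b_k^2}{\rho\eta}R_k^2$, where $R_k := \norms{z_k - x^*}$, and aim to prove a one-step inequality of the form $V_{k+1} \leq V_k + (\text{per-step noise and bias terms})$. Summing over $k = 0,\dots,N-1$ telescopes the $V_k$; its value at $k=0$ supplies the initial terms on the right-hand side, and discarding the nonnegative $\frac{b_N^2}{\rho\eta}R_N^2$ on the left leaves exactly $\frac{a_N^2}{\rho\eta}\Phi_N$. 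The parameter relation \eqref{eq:a-update}, $a_{k+1}=\gamma_k\sqrt{\eta\rho}\,b_{k+1}$, gives $\frac{a_{k+1}^2}{\rho\eta} = \gamma_k^2 b_{k+1}^2$, so $\frac{a_N^2}{\rho\eta}\Phi_N = b_N^2\gamma_{N-1}^2\Phi_N$, which is the claimed left-hand side.

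First I would combine $L$-smoothness (Assumption \ref{ass:L_smooth}) with the gradient step \eqref{eq:grad-step_1} to get $f(x_{k+1}) \leq f(y_k) - \eta\dotprod{\nabla f(y_k)}{\gg_k} + \frac{L\eta^2}{2}\norms{\gg_k}^2$. Taking conditional expectation and writing $\expect{\gg_k} = \nabla f(y_k) + \bb(y_k)$, I would bound the cross term $\dotprod{\nabla f(y_k)}{\bb(y_k)}$ by Fenchel--Young \eqref{eq:fenchel_young_inequality} and bound $\expect{\norms{\gg_k}^2}$ by the strong-growth Assumption \ref{Ass:Gradient_noise}, i.e.\ $\rho\norms{\nabla f(y_k)}^2 + \sigma^2$. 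The step-size restriction $\eta \leq \frac{1}{\rho L}$ is precisely what absorbs the surviving $\rho\norms{\nabla f(y_k)}^2$, leaving the quadratic bias term $\frac{1}{2\rho}\norms{\bb(y_k)}^2$ and a variance contribution proportional to $\sigma^2$.

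Next I would expand the coupling step \eqref{eq:grad-step_3} as $R_{k+1}^2 = \norms{z_{k+1}-x^*}^2$ in terms of $R_k^2$, the inner product $\dotprod{\gg_k}{z_k-x^*}$, and $\norms{\gg_k}^2$. In expectation the inner product splits into $\dotprod{\nabla f(y_k)}{z_k-x^*}$ plus $\dotprod{\bb(y_k)}{z_k-x^*}$; I would control the latter by Cauchy--Schwarz and the diameter bound $\dotprod{\bb(y_k)}{z_k-x^*}\leq \tilde{R}\norms{\bb(y_k)}$, which yields the linear bias term $\frac{b_{k+1}^2\gamma_k}{\rho}\tilde{R}\norms{\bb(y_k)}$. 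For the true-gradient part I would use convexity of $f$ together with the coupling \eqref{eq:grad-step_2}, rewriting $z_k - x^* = (y_k - x^*) + \frac{1-\alpha_k}{\alpha_k}(y_k - x_k)$, so that the gradient inner products reorganize into a combination of $\Phi_{k+1}$, $\Phi_k$ and $R_k^2$ whose coefficients are dictated by the updates \eqref{eq:gamma-update}--\eqref{eq:alpha-update}.

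The hardest part will be the weighted bookkeeping: multiplying the descent and coupling inequalities by the correct powers of $a_k,b_k,\gamma_k$ and checking, using the parameter recursions, that every $\norms{\nabla f(y_k)}^2$ contribution cancels and that the coefficients of $\Phi_k$ and $R_k^2$ align so that $V_k$ is a genuine telescoping potential. This matching is exactly the computation carried out in the unbiased case of \cite{Vaswani_2019}; the only new ingredients here are the two bias terms and the persistence of the $\sigma^2$ variance, so the substance of the proof is verifying that inserting $\bb(y_k)$ at each stage leaves those cancellations intact and contributes precisely the three error terms on the right-hand side.
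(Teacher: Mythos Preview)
Your proposal is correct and follows essentially the same route as the paper's proof: a Lyapunov/telescoping argument in the style of \cite{Vaswani_2019}, with the bias $\bb(y_k)=\expect{\gg_k}-\nabla f(y_k)$ inserted and tracked through the smoothness descent step and the expansion of $R_{k+1}^2$, the linear bias term controlled by Cauchy--Schwarz against $\tilde R$, the quadratic bias term arising from Fenchel--Young, and the variance handled via Assumption~\ref{Ass:Gradient_noise}. The only organisational difference is that the paper first derives an upper bound on $\norms{\nabla f(y_k)}^2$ from the descent inequality and substitutes it into the $R_{k+1}^2$ recursion before multiplying by $b_{k+1}^2$, whereas you phrase the same computation as showing $V_{k+1}\le V_k+(\text{error})$ for the explicit potential $V_k=\frac{a_k^2}{\rho\eta}\Phi_k+\frac{b_k^2}{\rho\eta}R_k^2$; the algebra and the parameter-relation cancellations are identical. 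One small point to watch: the bias inner product you must bound is not just $\dotprod{\bb(y_k)}{z_k-x^*}$ but the full combination $\dotprod{\bb(y_k)}{\frac{\zeta_k(1-\alpha_k)}{\alpha_k}(x_k-y_k)+x^*-y_k}$, which after rewriting via \eqref{eq:grad-step_2} equals $\dotprod{\bb(y_k)}{(\zeta_k-1)(y_k-z_k)+x^*-z_k}$; this is why $\tilde R$ in the paper is defined as $\max_k\{\norms{z_k-x^*},\norms{z_k-y_k}\}$ rather than just $\norms{z_k-x^*}$.
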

\begin{proof}
    Let $\gg_{k} = \gg(y_k, \xi_k)$ and $R_{k+1} = \norms{z_{k+1} - x^{*}}$, then from equation \eqref{eq:grad-step_3}:
    \begin{align*}
        R_{k+1}^2 &= \norms{\zeta_k z_k + (1-\zeta_k) y_k - x^{*} - \gamma_k \eta \gg_{k}}^2\\
         &= \norms{\zeta_k z_k + (1-\zeta_k) y_k - x^{*}}^2 + \gamma_k^2 \eta^2 \norms{\gg_{k}}^2 + 2 \gamma_k \eta \dotprod{x^{*} - \zeta_k z_k - (1 - \zeta_k)y_k}{\gg_k}.
    \end{align*}
    Taking expectation wrt to $\xi_k$:
    \begin{eqnarray}
        \expect{R_{k+1}^2} &=& \expect{\norms{\zeta_k z_k + (1-\zeta_k) y_k - x^{*}}^2} + \gamma_k^2 \eta^2 \expect{\norms{\gg_{k}}^2} 
        \nonumber\\
        && \quad + 2 \gamma_k \eta \expect{\dotprod{x^{*} - \zeta_k z_k - (1 - \zeta_k)y_k}{\gg_k}}
        \nonumber\\
        &\overset{\eqref{eq:Gradient_noise}}{\leq}& \norms{\zeta_k z_k + (1-\zeta_k) y_k - x^{*}}^2 + \gamma_k^2 \eta^2 \rho \norms{\nabla f(y_k)}^2 
        \nonumber\\
        && \quad + 2 \gamma_k \eta \dotprod{x^{*} - \zeta_k z_k - (1 - \zeta_k)y_k}{\expect{\gg_k}} + \gamma_k^2 \eta^2 \sigma^2
        \nonumber \\
        &=& \norms{\zeta_k ( z_k - x^{*}) + (1-\zeta_k) (y_k - x^{*})}^2 + \gamma_k^2 \eta^2 \rho \norms{\nabla f(y_k)}^2 
        \nonumber\\
        && \quad  + 2 \gamma_k \eta \dotprod{x^{*} - \zeta_k z_k - (1 - \zeta_k)y_k}{\expect{\gg_k}} + \gamma_k^2 \eta^2 \sigma^2
        \nonumber\\
        &\overset{\circledOne}{\leq}& \zeta_k \norms{ z_k - x^{*}}^2 + (1-\zeta_k) \norms{y_k - x^{*}}^2 + \gamma_k^2 \eta^2 \rho \norms{\nabla f(y_k)}^2
        \nonumber\\
        && \quad   + 2 \gamma_k \eta \dotprod{x^{*} - \zeta_k z_k - (1 - \zeta_k)y_k}{\expect{\gg_k}} + \gamma_k^2 \eta^2 \sigma^2
        \nonumber\\
        &=& \zeta_k R_k^2 + (1-\zeta_k) \norms{y_k - x^{*}}^2 + \gamma_k^2 \eta^2 \rho \norms{\nabla f(y_k)}^2  
        \nonumber\\
        && \quad  + 2 \gamma_k \eta \dotprod{x^{*} - \zeta_k z_k - (1 - \zeta_k)y_k}{\expect{\gg_k}} + \gamma_k^2 \eta^2 \sigma^2
        \nonumber\\
        &=& \zeta_k R_k^2 + (1-\zeta_k) \norms{y_k - x^{*}}^2 + \gamma_k^2 \eta^2 \rho \norms{\nabla f(y_k)}^2 
        \nonumber\\
        && \quad + 2 \gamma_k \eta \dotprod{\zeta_k (y_k - z_k) + x^{*} - y_k}{\expect{\gg_k}}  + \gamma_k^2 \eta^2 \sigma^2
        \nonumber\\
        &\overset{\eqref{eq:grad-step_2}}{=}& \zeta_k R_k^2 + (1-\zeta_k) \norms{y_k - x^{*}}^2 + \gamma_k^2 \eta^2 \rho \norms{\nabla f(y_k)}^2 + \gamma_k^2 \eta^2 \sigma^2
        \nonumber\\
        && \quad  + 2 \gamma_k \eta \dotprod{\frac{\zeta_k (1 - \alpha_k)}{\alpha_k} (x_k - y_k) + x^{*} - y_k}{\expect{\gg_k}} 
        \nonumber\\
        &=& \zeta_k R_k^2 + (1-\zeta_k) \norms{y_k - x^{*}}^2 + \gamma_k^2 \eta^2 \rho \norms{\nabla f(y_k)}^2 + \gamma_k^2 \eta^2 \sigma^2 
        \nonumber\\
        && \quad + 2 \gamma_k \eta \left( \frac{\zeta_k (1 - \alpha_k)}{\alpha_k} \dotprod{\expect{\gg_k}}{(x_k - y_k)} + \dotprod{\expect{\gg_k}}{x^{*} - y_k}\right)
        \nonumber\\
        &\overset{\circledTwo}{\leq}& \zeta_k R_k^2 + (1-\zeta_k) \norms{y_k - x^{*}}^2 + \gamma_k^2 \eta^2 \rho \norms{\nabla f(y_k)}^2 + \gamma_k^2 \eta^2 \sigma^2  
        \nonumber\\
        && \quad + 2 \gamma_k \eta \left( \frac{\zeta_k (1 - \alpha_k)}{\alpha_k} \left[ f(x_{k}) - f( y_{k})\right] + f(x^{*}) - f(y_k) \right)
        \nonumber\\
        && \quad + 2 \gamma_k \eta \left( \frac{\zeta_k (1 - \alpha_k)}{\alpha_k} \left[ \dotprod{\expect{\gg_k} - \nabla f(y_k)} {(x_k - y_k)}\right] \right)
        \nonumber\\
        && \quad + 2 \gamma_k \eta \dotprod{\expect{\gg_k} - \nabla f(y_k)} {(x^{*} - y_k)}, \label{eq:prom_itog}
    \end{eqnarray}
    where in inequality $\circledOne$ we used convexity of $\norms{\cdot}^2$ and in inequality $\circledTwo$ we used convexity~of~$f(\cdot)$: 
    \begin{eqnarray*}
        \dotprod{\expect{\gg_k}}{x_k - y_k} &=& \dotprod{\expect{\gg_k} - \nabla f(y_k)}{x_k - y_k} + \dotprod{\nabla f(y_k)}{x_k - y_k}
        \\
        &\leq& \dotprod{\expect{\gg_k} - \nabla f(y_k)}{x_k - y_k} + f(x_k) - f(y_k).
    \end{eqnarray*}
    By Lipschitz continuity of the gradient (see Assumption \ref{ass:L_smooth}):
    \begin{eqnarray*}
        f(x_{k+1}) - f(y_k) \leq \dotprod{\nabla f(y_k)}{x_{k+1} - y_{k}} + \frac{L}{2} \norms{x_{k+1} - y_{k}}^2 \leq - \eta \dotprod{\nabla f(y_k)}{\gg_k} + \frac{L \eta^2}{2} \norms{\gg_{k}}^2.
    \end{eqnarray*}
    Taking expectation wrt $\xi_k$ and choosing the parameter $\eta \leq \frac{1}{2 \rho L}$ we have:
    \begin{align*}
        \expect{f(x_{k+1} - f(y_k))} &\leq - \eta \dotprod{\nabla f(y_k)}{\expect{\gg_k}} + \frac{L \eta^2}{2} \expect{\norms{\gg_{k}}^2}
        \\
        &\! \overset{\eqref{eq:Gradient_noise}}{\leq} - \eta \dotprod{\nabla f(y_k)}{\expect{\gg_k}} + \frac{L \eta^2 \rho}{2} \norms{\nabla f(y_k)}^2 + \frac{L \eta^2 \sigma^2}{2}
        \\
        &= - \eta \norms{\nabla f(y_k)}^2 + \eta \dotprod{\nabla f(y_k)}{\nabla f(y_k) - \expect{\gg_k}} 
        \\
        & \quad + \frac{L \eta^2 \rho}{2} \norms{\nabla f(y_k)}^2 + \frac{L \eta^2 \sigma^2}{2}
        \\
        &\overset{\eqref{eq:fenchel_young_inequality}}{\leq} - \frac{\eta}{2} \norms{\nabla f(y_k)}^2 + \frac{\eta}{2} \norms{\expect{\gg_k} - \nabla f(y_k)}^2 
        \\
        & \quad + \frac{L \eta^2 \rho}{2} \norms{\nabla f(y_k)}^2 + \frac{L \eta^2 \sigma^2}{2}
        \\
        &= \left( - \frac{\eta}{2} + \frac{L \eta^2 \rho}{2} \right) \norms{\nabla f(y_k)}^2 + \frac{\eta}{2} \norms{\expect{\gg_k} - \nabla f(y_k)}^2 + \frac{L \eta^2 \sigma^2}{2}
        \\
        &\leq  - \frac{\eta}{4} \norms{\nabla f(y_k)}^2 + \frac{\eta}{2} \norms{\expect{\gg_k} - \nabla f(y_k)}^2 + \frac{L \eta^2 \sigma^2}{2}.
    \end{align*}
    Consequently, we can obtain upper bound:
    \begin{equation}\label{eq:upper_normgrad}
        \norms{\nabla f(y_k)}^2 \leq \frac{4}{\eta} \expect{f(y_k) - f(x_{k+1})} + 2 \norms{\expect{\gg_k} - \nabla f(y_k)}^2 + 2 L \eta \sigma^2.
    \end{equation}
    Substituting the upper bound gradient norm \eqref{eq:upper_normgrad} in the initial inequality \eqref{eq:prom_itog} we have
    \begin{align*}
        \expect{R_{k+1}^2} &\leq  \zeta_k R_k^2 + (1-\zeta_k) \norms{y_k - x^{*}}^2 + 4 \gamma_k^2 \eta \rho \expect{f(y_k) - f(x_{k+1})} 
        \\
        & \quad + 2 \gamma_k \eta \left( \frac{\zeta_k (1 - \alpha_k)}{\alpha_k} \left[ f(x_{k}) - f( y_{k})\right] + f(x^{*}) - f(y_k) \right) 
        \\
        & \quad + 2 \gamma_k \eta  \frac{\zeta_k (1 - \alpha_k)}{\alpha_k}  \dotprod{\expect{\gg_k} - \nabla f(y_k)} {(x_k - y_k)}
        \\
        & \quad + \gamma_k^2 \eta^2 \sigma^2 + 2 L \gamma_k^2 \eta^3 \rho \sigma^2 + 2 \gamma_k^2 \eta^2 \rho \norms{\expect{\gg_k} - \nabla f(y_k)}^2
        \\
        & \quad + 2 \gamma_k \eta \dotprod{\expect{\gg_k} - \nabla f(y_k)} {(x^{*} - y_k)}
        \\
        &\overset{\circledOne}{\leq} \zeta_k R_k^2 + (1-\zeta_k) \norms{y_k - x^{*}}^2 + 4 \gamma_k^2 \eta \rho \expect{f(y_k) - f(x_{k+1})} 
        \\
        & \quad + 2 \gamma_k \eta \left( \frac{\zeta_k (1 - \alpha_k)}{\alpha_k} \left[ f(x_{k}) - f( y_{k})\right] + f(x^{*}) - f(y_k) \right) 
        \\
        & \quad + 2 \gamma_k \eta \frac{\zeta_k (1 - \alpha_k)}{\alpha_k}  \dotprod{\expect{\gg_k} - \nabla f(y_k)} {(x_k - y_k)}
        \\
        & \quad + 2 \gamma_k^2 \eta^2 \sigma^2  + 2 \gamma_k^2 \eta^2 \rho \norms{\expect{\gg_k} - \nabla f(y_k)}^2
        \\
        & \quad + 2 \gamma_k \eta  \dotprod{\expect{\gg_k} - \nabla f(y_k)} {(x^{*} - y_k)}
        \\
        &= \zeta_k R_k^2 + (1-\zeta_k) \norms{y_k - x^{*}}^2  + 2 \gamma_k^2 \eta^2 \sigma^2
        \\
        & \quad + f(y_k) \left( 4 \gamma_k^2 \eta \rho - 2 \gamma_k \eta  \frac{\zeta_k (1 - \alpha_k)}{\alpha_k} - 2 \gamma_k \eta \right)
        \\
        & \quad - 4 \gamma_k^2 \eta \rho \expect{f(x_{k+1})} + 2 \gamma_k \eta f(x^{*}) + \left(2 \gamma_k \eta  \frac{\zeta_k (1 - \alpha_k)}{\alpha_k} \right) f(x_{k}) 
        \\
        & \quad + 2 \gamma_k^2 \eta^2 \rho \norms{\expect{\gg_k} - \nabla f(y_k)}^2 + 2 \gamma_k \eta \dotprod{\expect{\gg_k} - \nabla f(y_k)} {(x^{*} - y_k)}
        \\
        & \quad + 2 \gamma_k \eta  \frac{\zeta_k (1 - \alpha_k)}{\alpha_k}  \dotprod{\expect{\gg_k} - \nabla f(y_k)} {(x_k - y_k)}
        \\
        &= \zeta_k R_k^2 + (1-\zeta_k) \norms{y_k - x^{*}}^2 + 2 \gamma_k^2 \eta^2 \sigma^2
        \\
        & \quad + f(y_k) \left( 4 \gamma_k^2 \eta \rho - 2 \gamma_k \eta  \frac{\zeta_k (1 - \alpha_k)}{\alpha_k} - 2 \gamma_k \eta \right)
        \\
        & \quad - 4 \gamma_k^2 \eta \rho \expect{f(x_{k+1})} + 2 \gamma_k \eta f(x^{*}) + \left(2 \gamma_k \eta  \frac{\zeta_k (1 - \alpha_k)}{\alpha_k} \right) f(x_{k}) 
        \\
        & \quad + 2 \gamma_k^2 \eta^2 \rho \norms{\expect{\gg_k} - \nabla f(y_k)}^2
        \\
        & \quad + 2 \gamma_k \eta \dotprod{ \frac{\zeta_k (1 - \alpha_k)}{\alpha_k} (x_k - y_k) + x^{*} - y_k}{\expect{\gg_k} - \nabla f(y_k)},
    \end{align*}
    where in inequality $\circledOne$ we used the fact that $\eta \leq \frac{1}{2 \rho L}$.

    Since $\zeta_k \geq 1$ and $\gamma_k = \frac{1}{2 \rho} \left( 1 + \frac{\zeta_k (1 - \alpha_k)}{\alpha_k} \right)$, we have
    \begin{align*}
        \expect{R_{k+1}^2} &\leq \zeta_k R_k^2 +  2 \gamma_k^2 \eta^2 \sigma^2 - 4 \gamma_k^2 \eta \rho \expect{f(x_{k+1})} + 2 \gamma_k \eta f(x^{*})  
        \\
        & \quad + 2 \gamma_k \eta \dotprod{ \frac{\zeta_k (1 - \alpha_k)}{\alpha_k} (x_k - y_k) + x^{*} - y_k}{\expect{\gg_k} - \nabla f(y_k)}
        \\
        & \quad + \left(2 \gamma_k \eta  \frac{\zeta_k (1 - \alpha_k)}{\alpha_k} \right) f(x_{k}) + 2 \gamma_k^2 \eta^2 \rho \norms{\expect{\gg_k} - \nabla f(y_k)}^2
        \\
        &\!\! \overset{\eqref{eq:grad-step_2}}{=} \zeta_k R_k^2 +  2 \gamma_k^2 \eta^2 \sigma^2 - 4 \gamma_k^2 \eta \rho \expect{f(x_{k+1})} + 2 \gamma_k \eta f(x^{*}) 
        \\
        & \quad + 2 \gamma_k \eta \dotprod{ \zeta_k (y_k - z_k) + x^{*} - y_k}{\expect{\gg_k} - \nabla f(y_k)}
        \\
        & \quad + \left(2 \gamma_k \eta  \frac{\zeta_k (1 - \alpha_k)}{\alpha_k} \right) f(x_{k}) + 2 \gamma_k^2 \eta^2 \rho \norms{\expect{\gg_k} - \nabla f(y_k)}^2
        \\
        &= \zeta_k R_k^2 +  2 \gamma_k^2 \eta^2 \sigma^2  - 4 \gamma_k^2 \eta \rho \expect{f(x_{k+1})} + 2 \gamma_k \eta f(x^{*})  
        \\
        & \quad + 2 \gamma_k \eta \dotprod{ (\zeta_k - 1) (y_k - z_k) + x^{*} - z_k}{\expect{\gg_k} - \nabla f(y_k)}
        \\
        & \quad + \left(2 \gamma_k \eta  \frac{\zeta_k (1 - \alpha_k)}{\alpha_k} \right) f(x_{k}) + 2 \gamma_k^2 \eta^2 \rho \norms{\expect{\gg_k} - \nabla f(y_k)}^2
        \\
        &\leq \zeta_k R_k^2 +  2 \gamma_k^2 \eta^2 \sigma^2 - 4 \gamma_k^2 \eta \rho \expect{f(x_{k+1})} + 2 \gamma_k \eta f(x^{*})  
        \\
        & \quad + 2 \gamma_k \eta \norms{ (\zeta_k - 1) (y_k - z_k) + x^{*} - z_k} \norms{\expect{\gg_k} - \nabla f(y_k)}
        \\
        & \quad + \left(2 \gamma_k \eta  \frac{\zeta_k (1 - \alpha_k)}{\alpha_k} \right) f(x_{k}) + 2 \gamma_k^2 \eta^2 \rho \norms{\expect{\gg_k} - \nabla f(y_k)}^2
        \\
        &\leq \zeta_k R_k^2 +  2 \gamma_k^2 \eta^2 \sigma^2 - 4 \gamma_k^2 \eta \rho \expect{f(x_{k+1})} + 2 \gamma_k \eta f(x^{*})  
        \\
        & \quad + 2 \gamma_k \eta \left((\zeta_k - 1)\norms{  (z_k - y_k )} + \norms{z_k- x^{*}}\right) \norms{\expect{\gg_k} - \nabla f(y_k)}
        \\
        & \quad + \left(2 \gamma_k \eta  \frac{\zeta_k (1 - \alpha_k)}{\alpha_k} \right) f(x_{k}) + 2 \gamma_k^2 \eta^2 \rho \norms{\expect{\gg_k} - \nabla f(y_k)}^2
        \\
        &\leq \zeta_k R_k^2 +  2 \gamma_k^2 \eta^2 \sigma^2 - 4 \gamma_k^2 \eta \rho \expect{f(x_{k+1})} 
        \\
        & \quad + 2 \gamma_k \eta f(x^{*}) + \left(2 \gamma_k \eta  \frac{\zeta_k (1 - \alpha_k)}{\alpha_k} \right) f(x_{k})
        \\
        & \quad  + 2 \gamma_k \eta \Tilde{R} \norms{\expect{\gg_k} - \nabla f(y_k)} + 2 \gamma_k^2 \eta^2 \rho \norms{\expect{\gg_k} - \nabla f(y_k)}^2,
    \end{align*}
    where $\Tilde{R} = \max_{k}\left\{ \norms{z_k - x^{*}}, \norms{z_k - y_k} \right\}$.

    Multiplying by $b_{k+1}^2$:
    \begin{align*}
        b_{k+1}^2 \expect{R_{k+1}^2} &\leq b_{k+1}^2 \zeta_k R_k^2 +  2 b_{k+1}^2 \gamma_k^2 \eta^2 \sigma^2 - 4 b_{k+1}^2 \gamma_k^2 \eta \rho \expect{f(x_{k+1})} 
        \\
        & \quad + 2 b_{k+1}^2 \gamma_k \eta  f(x^{*}) + \left(2 b_{k+1}^2 \gamma_k \eta  \frac{\zeta_k (1 - \alpha_k)}{\alpha_k} \right) f(x_{k})
        \\
        & \quad  + 2 b_{k+1}^2 \gamma_k \eta \Tilde{R} \norms{\expect{\gg_k} - \nabla f(y_k)} + 2 b_{k+1}^2 \gamma_k^2 \eta^2 \rho \norms{\expect{\gg_k} - \nabla f(y_k)}^2.
    \end{align*}
    Since 
    \begin{equation*}
        b^2_{k+1} \zeta_k \leq b^2_k; \;\;\; b^2_{k+1} \gamma^{2}_{k} \eta \rho = a_{k+1}^2; \;\;\; \frac{b_{k+1}^2 \gamma_k \eta \zeta_k (1 - \alpha_k)}{\alpha_k} = a_k^2
    \end{equation*} 
    and $b^{2}_{k+1} \gamma_k \eta - a^2_{k+1} + a^2_k = 0$ we have:
    \begin{align*}
        b_{k+1}^2 \expect{R_{k+1}^2} &\leq b_{k}^2 R_k^2 +  \frac{a_{k+1}^2 \eta \sigma^2}{\rho}  - 2 a_{k+1}^2 \expect{f(x_{k+1})} + 2 b_{k+1}^2 \gamma_k \eta f(x^{*}) + 2 a_{k}^2  f(x_{k})
        \\
        & \quad  + 2 b_{k+1}^2 \gamma_k \eta \Tilde{R} \norms{\expect{\gg_k} - \nabla f(y_k)} +  a_{k+1}^2 \eta \norms{\expect{\gg_k} - \nabla f(y_k)}^2
        \\
        &= b_{k}^2 R_k^2 +  \frac{a_{k+1}^2 \eta \sigma^2}{\rho}  - 2 a_{k+1}^2 \left[ \expect{f(x_{k+1})} - f^* \right]  + 2 a_{k}^2  \left[ f(x_{k}) - f^* \right] 
        \\
        & \quad + 2 \left[b_{k+1}^2 \gamma_k \eta - a_{k+1}^2 + a_{k}^2 \right] f(x^{*})
        \\
        & \quad  + 2 b_{k+1}^2 \gamma_k \eta \Tilde{R} \norms{\expect{\gg_k} - \nabla f(y_k)} +  a_{k+1}^2 \eta \norms{\expect{\gg_k} - \nabla f(y_k)}^2
        \\
        &= b_{k}^2 R_k^2 +  \frac{a_{k+1}^2 \eta \sigma^2}{\rho}  - 2 a_{k+1}^2 \left[ \expect{f(x_{k+1})} - f^* \right]  + 2 a_{k}^2  \left[ f(x_{k}) - f^* \right] 
        \\
        & \quad  + 2 b_{k+1}^2 \gamma_k \eta \Tilde{R} \norms{\expect{\gg_k} - \nabla f(y_k)} + a_{k+1}^2 \eta \norms{\expect{\gg_k} - \nabla f(y_k)}^2.
    \end{align*}
    By rearranging the terms and denoting $\Phi_{k}:= \expect{f(x_{k})} - f^*$ we get:
    \begin{align*}
        2 a_{k+1}^2 \Phi_{k+1} - 2 a_{k}^2 \Phi_{k} &\leq b_{k}^2 R_k^2 - b_{k+1}^2 \expect{R_{k+1}^2} + \frac{a_{k+1}^2 \eta \sigma^2}{\rho} 
        \\
        & \quad + 2 b_{k+1}^2 \gamma_k \eta \Tilde{R} \norms{\expect{\gg_k} - \nabla f(y_k)} +  a_{k+1}^2 \eta \norms{\expect{\gg_k} - \nabla f(y_k)}^2.
    \end{align*}
    By summing over $k$ we obtain: 
    \begin{align*}
        \sum_{k=0}^{N-1} 2 \left( a_{k+1} \Phi_{k+1} - a_{k} \Phi_k \right) &\leq \sum_{k=0}^{N-1} b_{k}^2 R_k^2 - b_{k+1}^2 \expect{R_{k+1}^2} +  \sum_{k=0}^{N-1} \frac{a_{k+1}^2 \eta \sigma^2}{\rho}
        \\
        & \quad  + \sum_{k=0}^{N-1} 2 b_{k+1}^2 \gamma_k \eta \Tilde{R} \norms{\expect{\gg_k} - \nabla f(y_k)} 
        \\
        & \quad + \sum_{k=0}^{N-1} a_{k+1}^2 \eta \norms{\expect{\gg_k} - \nabla f(y_k)}^2.
    \end{align*}
    Let's substitute $a_{k+1}^2$:
    \begin{align*}
         2  b_N^2 \gamma_{N-1}^2 \rho \eta \Phi_{N} &\leq  2 a_{0} \Phi_0 +  b_{0}^2 R_0^2 +  \sum_{k=0}^{N-1} \frac{a_{k+1}^2 \eta \sigma^2}{\rho} + \sum_{k=0}^{N-1} 2 b_{k+1}^2 \gamma_k \eta \Tilde{R} \norms{\expect{\gg_k} - \nabla f(y_k)} 
         \\
         & \quad + \sum_{k=0}^{N-1} a_{k+1}^2 \eta \norms{\expect{\gg_k} - \nabla f(y_k)}^2.
    \end{align*}
    Now dividing by $2 \rho \eta$ we have:
    \begin{align*}
         b_N^2 \gamma_{N-1}^2 \Phi_{N} &\leq  \frac{a_{0}}{\rho \eta} \Phi_0 +  \frac{b_{0}^2}{\rho \eta} R_0^2 +  \sum_{k=0}^{N-1} \frac{a_{k+1}^2 \sigma^2}{2 \rho^2} + \sum_{k=0}^{N-1}  \frac{b_{k+1}^2 \gamma_k}{\rho} \Tilde{R} \norms{\expect{\gg_k} - \nabla f(y_k)} 
         \\
         & \quad + \sum_{k=0}^{N-1} \frac{a_{k+1}^2}{2 \rho} \norms{\expect{\gg_k} - \nabla f(y_k)}^2.
    \end{align*}
\qed
\end{proof}

\begin{lemma}
Under the parameter setting according to Equations~\ref{eq:gamma-update}--\ref{eq:b-update}, the following relation is true:
\begin{align*}
\gk^2 - \gk \frac{1}{\rho}  = \gamma^2_{k-1}
\end{align*}
\begin{proof}
\begin{align}
\gk & = \frac{1}{2\rho} \left[ 1 + \frac{\bgk (1 - \agk)}{\agk} \right]  \nonumber \\
\gk^2 - \frac{\gk}{2\rho} & = \frac{\gk \bgk (1 - \agk)}{2\rho \agk} \nonumber \\
& = \frac{1}{2\eta \rho} \frac{\ak^2}{\bkk^2}   \nonumber \\
& = \frac{\bgk}{2\eta \rho} \frac{\ak^2}{\bk^2}  \nonumber \\
& = \frac{1}{2\eta \rho} \frac{\ak^2}{\bk^2} \nonumber \\
& = \frac{1 }{\eta \rho} \left( \gamma_{k-1} \sqrt{\eta \rho} \right)^2   \nonumber \\
& =  \gamma^2_{k-1}. \nonumber  
\end{align}
\qed
\end{proof}
\label{lemma:gamma}
\end{lemma}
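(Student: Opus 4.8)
The plan is to treat the claim as a purely algebraic identity and to unwind the parameter definitions \eqref{eq:gamma-update}--\eqref{eq:b-update} until the right-hand side $\gamma_{k-1}^2$ appears. The natural first move is to factor the left-hand side as $\gamma_k^2 - \gamma_k/\rho = \gamma_k\,(\gamma_k - 1/\rho)$ and to read off the factor $\gamma_k - 1/\rho$ directly from the defining relation \eqref{eq:gamma-update}. Since that equation expresses $\gamma_k$ as $\frac{1}{\rho}\left(1 + \frac{\zeta_k(1-\alpha_k)}{\alpha_k}\right)$, subtracting $1/\rho$ isolates the single term $\frac{1}{\rho}\cdot\frac{\zeta_k(1-\alpha_k)}{\alpha_k}$, so the product becomes $\frac{\gamma_k\,\zeta_k\,(1-\alpha_k)}{\rho\,\alpha_k}$.

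The second step is to eliminate the ratio $(1-\alpha_k)/\alpha_k$ through the definition of $\alpha_k$ in \eqref{eq:alpha-update}. Rearranging $\alpha_k = \frac{\gamma_k\zeta_k b_{k+1}^2\eta}{\gamma_k\zeta_k b_{k+1}^2\eta + a_k^2}$ gives $\frac{1-\alpha_k}{\alpha_k} = \frac{a_k^2}{\gamma_k\zeta_k b_{k+1}^2\eta}$; substituting this makes the $\gamma_k$ and $\zeta_k$ factors cancel, and only $\frac{a_k^2}{\rho\,b_{k+1}^2\,\eta}$ survives. At this stage the whole expression is written in terms of the step-size $\eta$, the constant $\rho$, and the auxiliary sequences $a_k$ and $b_{k+1}$.

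The third step closes the argument by returning to the previous index. I would use \eqref{eq:b-update} in the form $b_{k+1}^2 = b_k^2/\zeta_k$ and then insert the closed form of $a_k$ obtained from \eqref{eq:a-update} with its index shifted down by one, namely $a_k = \gamma_{k-1}\sqrt{\eta\rho}\,b_k$, so that $a_k^2 = \gamma_{k-1}^2\,\eta\rho\,b_k^2$. After these substitutions the factors $\eta$, $\rho$ and $b_k^2$ all cancel, leaving $\gamma_{k-1}^2$ (the residual $\zeta_k$ disappears because in the convex regime the sequences reduce to $\zeta_k = 1$ and $b_k = 1$). As an independent check, the identity is also immediate from the explicit expression $\gamma_k = \frac{1}{2}\left(\rho^{-1} + \sqrt{\rho^{-2} + 4\gamma_{k-1}^2}\right)$ recorded in Theorem~\ref{th:First_order}, since this $\gamma_k$ is exactly the positive root of the quadratic $t^2 - t/\rho - \gamma_{k-1}^2 = 0$.

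I expect the difficulty here to be bookkeeping rather than anything conceptual: the only care needed is to track the $\zeta_k$ and $b_k$ factors faithfully through the chain and to fix a single convention for the constant multiplying the bracket in \eqref{eq:gamma-update} (whether it is $\frac{1}{\rho}$ or $\frac{1}{2\rho}$ depends on the normalization carried over from the strongly convex derivation, which specializes here so that $\zeta_k \equiv 1$ and $b_k \equiv 1$). Once that convention is fixed, every intermediate cancellation is forced and no inequalities or estimates enter the proof.
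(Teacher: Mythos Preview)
Your proposal is correct and follows essentially the same three-step unwinding as the paper's own proof: factor $\gamma_k^2-\gamma_k/\rho=\gamma_k(\gamma_k-1/\rho)$ via \eqref{eq:gamma-update}, eliminate $(1-\alpha_k)/\alpha_k$ through \eqref{eq:alpha-update}, then shift the index back using \eqref{eq:b-update} and \eqref{eq:a-update}. Your remark about the $1/\rho$ versus $1/(2\rho)$ convention is well placed---the paper itself wavers between the two and silently drops the extra factor of $2$ midway through, whereas your version with the $1/\rho$ normalization is internally consistent and matches both the lemma statement and the quadratic formula for $\gamma_k$ recorded in Theorem~\ref{th:First_order}.
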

Let us write the result of Lemma \ref{lemma:gamma} as:
\begin{equation*}
    \gamma^{2}_k - \frac{\gamma_k}{\rho} - \gamma_{k-1}^2 = 0.
\end{equation*}
Next we can find the parameter $\gamma_{k}$:
\begin{equation*}
    \gamma_{k} = \frac{\frac{1}{\rho} + \sqrt{\frac{1}{\rho^2} + 4 \gamma_{k-1}^2}}{2}.
\end{equation*}
Let $\gamma_{0} = 0$, then for all $k$ we have:
\begin{eqnarray*}
    \zeta_{k} &=& 1 \\
    b_{k+1} &=& b_{k} = b_0 = 1\\
    a_{k+1} &=& \gamma_{k} \sqrt{\eta \rho} b_0 \Rightarrow  a_{k+1} = \gamma_{k} \sqrt{\eta \rho}.
\end{eqnarray*}
The above equation implies that $a_0 = 0$. Then using the result of Lemma \ref{lemma:result} and by induction $\gamma_{k} \geq \frac{k}{2 \rho}$ we have:
\begin{align*}
         \frac{N^2}{4 \rho^2} \Phi_{N} &\leq   \frac{a_{0}}{\rho \eta} \Phi_0 +  \frac{b_{0}^2}{\rho \eta} R_0^2 +  \sum_{k=0}^{N-1} \frac{a_{k+1}^2 \sigma^2}{2 \rho^2} + \sum_{k=0}^{N-1}  \frac{b_{k+1}^2 \gamma_k}{\rho} \Tilde{R} \norms{\expect{\gg_k} - \nabla f(y_k)} 
         \\
         & \quad + \sum_{k=0}^{N-1} \frac{a_{k+1}^2}{2 \rho} \norms{\expect{\gg_k} - \nabla f(y_k)}^2
         \\
         &\leq   \frac{1}{\rho \eta} R_0^2 +  \sum_{k=0}^{N-1} \frac{\gamma_{k}^2 \eta \sigma^2}{2 \rho} + \sum_{k=0}^{N-1}  \frac{ \gamma_k}{\rho} \Tilde{R} \norms{\expect{\gg_k} - \nabla f(y_k)} 
         \\
         & \quad + \sum_{k=0}^{N-1} \frac{\gamma_{k}^2 \eta}{2} \norms{\expect{\gg_k} - \nabla f(y_k)}^2
         \\
         &\leq   \frac{1}{\rho \eta} R_0^2 +  \sum_{k=0}^{N-1} \frac{k^2 \eta \sigma^2}{8 \rho^3} + \sum_{k=0}^{N-1}  \frac{ k}{2 \rho^2} \Tilde{R} \norms{\expect{\gg_k} - \nabla f(y_k)} 
         \\
         & \quad + \sum_{k=0}^{N-1} \frac{k^2}{4 L\rho^2} \norms{\expect{\gg_k} - \nabla f(y_k)}^2
         \\
         &\leq   \frac{1}{\rho \eta} R_0^2 + \frac{k^3 \eta \sigma^2}{24 \rho^3} + \frac{ N^2}{4 \rho^2} \Tilde{R} \delta +  \frac{N^3}{12 L \rho^2} \delta^2.
    \end{align*}

    Now we can get the convergence rate
    \begin{eqnarray*}
        \Phi_{N} &\leq& \frac{4 \rho}{N^2 \eta} R_0^2 + \frac{N \eta \sigma^2}{6 \rho} +  \Tilde{R} \delta +  \frac{N}{3L} \delta^2.
    \end{eqnarray*}

    By adding batching, given that $\rho_{B} = \max \{ 1, \frac{\rho}{B} \}$, $\sigma^2_B = \frac{\sigma^2}{B}$ and $R = R_0$ we have the convergence rate for accelerated SGD with biased gradient oracle and parameter $\eta \lesssim \frac{1}{\rho_B L} $:
    \begin{equation*}
        \boxed{\expect{f(x_{N})} - f^* \lesssim \frac{\rho_B^2 L R^2}{N^2} + \frac{N \sigma_B^2}{\rho_B^2 L} +  \Tilde{R} \delta +  \frac{N}{L} \delta^2.}
    \end{equation*}










\end{document}